\newtheoremstyle{nonum}{}{}{\itshape}{}{\bfseries}{.}{ }{\thmnote{#3}}
\newtheorem{thm}{Theorem}[section]%[subsection]
\newtheorem*{thm*}{Theorem}
\newtheorem{lem}[thm]{Lemma}
\newtheorem{rem}[thm]{Remark}
\newtheorem{conj}[thm]{Conjecture}
\newtheorem{definition}[thm]{Definition}
\newtheorem*{definition*}{Definition}
\newtheorem*{rems*}{Remarks}
\theoremstyle{nonum}
\newcommand{\R}{\mathbb R}
\newcommand{\RR}{\mathbb R}
\newcommand{\N}{\mathbb N}
\def\Vol{{\rm Vol}}
\newcommand{\iprod}[2]{\langle #1,#2 \rangle} % Inner product %
\def\calL{{\cal L}}
\def\vol{{\rm Vol}}
\def\conv{{\rm conv}}
\begin{document}
\title {On Godbersen's Conjecture}
\date{}
\author{S. Artstein-Avidan, K. Einhorn, D.Y. Florentin, Y. Ostrover }
\maketitle
\begin{abstract}
We provide a natural generalization of a geometric conjecture of 
F\'{a}ry and R\'{e}dei regarding the volume of the convex hull 
of $K \subset {\mathbb R}^n$, and its negative image $-K$.
We show that it implies Godbersen's conjecture regarding the mixed volumes of the
convex bodies $K$ and $-K$. We then use the same type of reasoning to produce the 
currently best known upper bound for the mixed volumes $V(K[j], -K[n-j])$, which 
is not far from Godbersen's conjectured bound.
To this end we prove a certain functional inequality generalizing 
Colesanti's difference function inequality. 
\end{abstract}

\section{Introduction and results}\label{sec:intro}
In this note we consider convex bodies $K\subset \RR^n$, that is,
 compact convex sets with non-empty interior.
The well known Rogers--Shephard bound for the volume of the so
called ``difference body'', $K-K = \{x-y \, | \, x,y \in K \}$, states
that
 \begin{equation}\label{eq:diff-body-ineq} 
 \vol(K-K) \le \binom{2n}{n}\vol(K).\end{equation}
This inequality was proved by Rogers and Shephard in \cite{RS}, where it was also
shown that
equality is attained only for simplices.
By a simplex we mean the convex hull of $n+1$ affinely independent
points in $\RR^n$. Chakerian simplified their argument in \cite{Chakerian},
and in \cite{RS2} they gave another variant of the proof, which we address in the appendix of this text.

A conjectured strengthening of the difference body inequality was suggested
in 1938 by Godbersen \cite{God} (and independently by Makai Jr. \cite{Makai}).

\begin{conj}\label{conj:god}
For any convex body $K\subset \RR^n$ and any $1\le j\le n-1$,  
\begin{equation}\label{eq:Godbersen-conj} V(K[j], -K[n-j])\le \binom{n}{j} \vol(K),\end{equation}
with equality attained only for simplices. 
\end{conj}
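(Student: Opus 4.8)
The idea is to trade mixed volumes for ordinary volumes and then apply Brunn--Minkowski-type machinery. Start from the polynomial identity $\vol\big(K+t(-K)\big)=\sum_{k=0}^{n}\binom{n}{k}t^{\,n-k}\,V(K[k],-K[n-k])$, valid for $t\ge 0$, which shows that Conjecture~\ref{conj:god} is \emph{equivalent} to the coefficientwise domination $\vol\big(K+t(-K)\big)\preceq \vol(K)\sum_{k=0}^{n}\binom{n}{k}^{2}t^{\,n-k}$ (with $\preceq$ read term by term in $t$). An even cleaner reformulation lifts the problem one dimension up: for $\lambda>0$ put $C_\lambda:=\conv\big((K\times\{0\})\cup(\lambda(-K)\times\{1\})\big)\subset\RR^{n+1}$, whose horizontal slice at height $s\in[0,1]$ is $(1-s)K+s\lambda(-K)$; expanding $\vol_{n}$ of this slice and integrating over $s$ yields $(n+1)\vol_{n+1}(C_\lambda)=\sum_{k=0}^{n}\lambda^{\,n-k}\,V(K[k],-K[n-k])$, so Godbersen is equivalent to $(n+1)\vol_{n+1}(C_\lambda)\preceq \vol(K)(1+\lambda)^{n}$, an inequality which for simplices holds with equality in every coefficient. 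The body $C_\lambda$ --- and for $\lambda=1$ its orthogonal projection to $\RR^{n}$, which is precisely the F\'{a}ry--R\'{e}dei hull $\conv(K\cup(-K))$ --- is the natural object on which to run a Rogers--Shephard-style argument. Merely evaluating the displayed inequality at $\lambda=1$, however, gives only the summed statement $\sum_{k} V(K[k],-K[n-k])\le 2^{n}\vol(K)$, and the bare Rogers--Shephard bound \eqref{eq:diff-body-ineq} gives only the term-by-term estimate $V(K[j],-K[n-j])\le \binom{2n}{n}\binom{n}{j}^{-1}\vol(K)$; the genuine content of the conjecture is the extraction of the \emph{individual} Taylor coefficients with the sharp constant $\binom{n}{j}$.

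For the analytic core I would use Pr\'{e}kopa--Leindler (equivalently, a Borell--Brascamp--Lieb inequality) in place of bare Brunn--Minkowski, applied to the covariogram $\delta^{(t)}(x):=\vol\big(K\cap(tK+x)\big)$. This function is supported on $K+t(-K)$, integrates to $t^{n}\vol(K)^{2}$ over $\RR^{n}$, has a concave $n$-th root, and for $0\le t\le 1$ attains its maximum value $t^{n}\vol(K)$ on the nonempty erosion $K\ominus tK$ (which contains $(1-t)K$). Colesanti's difference-function inequality is, in essence, the instance of a functional Rogers--Shephard-type inequality in which the covariogram is compared against a single simplicial ``tent''; the generalization needed here should compare $\delta^{(t)}$ simultaneously with the whole one-parameter family of homothetic tents indexed by $t\in[0,1]$ --- a lower bound for $\delta^{(t)}$ coming from concavity together with its vanishing on $\partial\big(K+t(-K)\big)$, and an upper bound by apex domination from the maximizing set. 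Integrating these estimates suitably in $t$ and disentangling the resulting linear relations among the $V(K[k],-K[n-k])$ should yield, for each $j$, an inequality $V(K[j],-K[n-j])\le c_{n,j}\,\vol(K)$ with $c_{n,j}$ an explicit ratio of beta integrals; the target is $c_{n,j}\le\binom{n}{j}$ for all $1\le j\le n-1$ (the endpoints $j\in\{1,n-1\}$, already known, giving a consistency check).

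The equality analysis should then be routine once the inequality is sharp: equality propagates back through each Pr\'{e}kopa--Leindler / Brunn--Minkowski step, forcing the sections $K\cap(tK+x)$ to be homothets of one fixed convex body for all admissible $x$, and a standard rigidity argument --- of the type revisited in the appendix for the Rogers--Shephard inequality --- then identifies $K$ as a simplex.

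The hard part, I expect, is the sharpness of the constant $c_{n,j}$ for intermediate $j$. The tent comparison is simultaneously tight for a simplex in its ``inner'' form (the lower bound on $\delta^{(t)}$) and its ``outer'' form (apex domination), but after integrating in $t$ and isolating a single power $t^{\,n-j}$ the two families of contributions no longer match coefficient by coefficient away from the endpoints, and the resulting discrepancy behaves like a ratio of central binomial coefficients --- of order $\sqrt{n}$ near $j\approx n/2$, exactly the gap between $\binom{2n}{n}\binom{n}{j}^{-1}\vol(K)$ and the conjectured $\binom{n}{j}\vol(K)$. Removing it seems to require either a proof of the full generalized F\'{a}ry--R\'{e}dei inequality for $\vol(\conv(K\cup(-K)))$ and its $\lambda$-scaled relatives (itself open), or replacement of the homothetic family of tents by a genuinely $j$-dependent comparison template fitted to the slices of $C_\lambda$. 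Without such a refinement I would expect this approach to deliver the conjectured bound only up to a multiplicative constant close to $1$, which would already be a strong approximate form of the conjecture; establishing the exact inequality for all $1\le j\le n-1$ would remain the crux.
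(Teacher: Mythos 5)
You are addressing Conjecture \ref{conj:god}, which is open: the paper itself offers no proof of it, only (i) the conditional result that the generalized F\'{a}ry--R\'{e}dei Conjecture \ref{Conj_Gen-Far-Red} implies it (Theorem \ref{G.F.R-implies-G}, via the exact simplex computation of Lemma \ref{lem:t-simplex-vol} with $\lambda=\frac{n+1-j}{n+1}$, $k=j$), and (ii) the unconditional approximate bound $V(K[j],-K[n-j])\le \frac{n^n}{j^j(n-j)^{n-j}}\vol(K)$ of Theorem \ref{thm:bound-for-God}. Your proposal, as you yourself concede in the last paragraph, also does not prove the statement: the crux --- extracting the individual coefficient $V(K[j],-K[n-j])$ with the sharp constant $\binom{n}{j}$ rather than up to a factor of order $\sqrt{j(n-j)/n}$ --- is left unresolved, so there is a genuine gap, and indeed no argument of the kind you sketch is currently known to close it. (One internal inconsistency: having correctly identified the discrepancy as being of order $\sqrt{n}$ near $j\approx n/2$, you later describe the attainable outcome as ``up to a multiplicative constant close to $1$''; what such arguments actually deliver is a factor $\simeq\sqrt{2\pi j(n-j)/n}$, which grows with $n$.)

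That said, your reductions are correct and run closely parallel to the paper's actual content, so it is worth matching them up. Your lifted body $C_\lambda$, whose slices give $(n+1)\vol_{n+1}(C_\lambda)=\sum_k\lambda^{n-k}V(K[k],-K[n-k])$, is precisely the Rogers--Shephard body $C(\lambda K,K)$ used in the appendix; the paper's route to the $\sqrt{n}$-off bound is not the covariogram but the inequality $\vol((1-\lambda)K\vee-\lambda K)\le\vol(K)$ (Theorem \ref{thm:ab-vol}), applied with $\lambda=(n-j)/n$, and proved either functionally via the $\lambda$-difference function inequality generalizing Colesanti (Theorem \ref{thm:lambda-col-2-Func}, giving in fact the stronger Theorem \ref{thm:strange}) or geometrically through $C(K,L)$ (Theorems \ref{thm:K-L-vol} and \ref{thm:RS-K-L-bound}). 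Your diagnosis of what is missing is also exactly the paper's: the sharp input would be the whole family of $\lambda$-scaled F\'{a}ry--R\'{e}dei inequalities, i.e. Conjecture \ref{Conj_Gen-Far-Red}, which is itself open (known only in the plane, Theorem \ref{Thm_Planar-GFR}). One technical caution on your analytic core: the ``apex domination'' step is delicate because the covariogram $\delta^{(t)}$ attains its maximum on the whole erosion $K\ominus tK$, not at a single point, and it is exactly in converting such tent comparisons into coefficientwise statements that the binomial-ratio loss you describe appears; so, if completed along the lines you indicate, your plan should be expected to reproduce Theorem \ref{thm:bound-for-God}, not Conjecture \ref{conj:god}.
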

 
Here $V(K_1, \ldots, K_n)$ denotes the mixed volume of the $n$ convex bodies $K_1, \ldots, K_n$, and $V(K[j], T[n-j])$ denotes the mixed volume of $j$ copies of the convex body $K$ and $n-j$ copies of the convex body $T$. 
We recall that for convex bodies $K_1, \ldots,K_m \subset {\mathbb R}^n$, and non-negative real numbers $\lambda_1, \ldots,\lambda_m$, a classical result of Minkowski states that the volume of $\sum \lambda_i K_i$ is a homogeneous polynomial of degree $n$ in $\lambda_i$,
\begin{equation}\label{Eq_Vol-Def}
\Vol \left(\sum_{i=1}^m \lambda_i K_i\right) = 
\sum_{i_1,\dots,i_n=1}^m \lambda_{i_1}\cdots\lambda_{i_n} V(K_{i_1},\dots,K_{i_n}),
\end{equation}
and the coefficient $V(K_{i_1},\dots,K_{i_n})$, which depends solely on $K_{i_1}, \ldots, K_{i_n}$, is called the mixed volume of $K_{i_1}, \ldots, K_{i_n}$.
The mixed volume is a non-negative,
translation invariant function, monotone with respect to set inclusion,
invariant under permutations of its arguments, 
and positively homogeneous in each argument. Moreover, one has $V(K[n]) = {\rm Vol}(K)$.
For further information on mixed volumes and their properties, see Section \textsection 5.1 of  \cite{Schneider-book}. 
 
The cases $j=1$ and $j=n-1$ of Conjecture \ref{conj:god} follow from the fact that $-K\subset nK$ for bodies with center of mass at the origin (see \cite{BonFenchel}, page 57). The same argument gives the bound 
\[ V(K[j], -K[n-j])\le n^{\min\{j,n-j\}}\vol(K), \]
for  $0 \leq j \leq n$.
The only other cases for which Conjecture \ref{conj:god} is verified, are simplices (which are the equality case) and bodies of constant width, as shown in \cite{God}. Godbersen's conjecture is indeed a strengthening of the difference body inequality \eqref{eq:diff-body-ineq} since, if Conjecture  \ref{conj:god} holds true, one may write  
\[ \vol(K-K)= \sum_{j=0}^n \binom{n}{j}V(K[j], -K[n-j])\le \sum_{j=0}^n \binom{n}{j}^2\vol(K)= \binom{2n}{n}\vol(K). \]
 
In 1950, F\'{a}ry and R\'{e}dei \cite{FaryRedei}
conjectured that for all convex bodies $K$ of fixed volume, one has
\begin{equation} \label{FR-Conj}  \min_{x\in K}\vol((K-x) \vee (x-K)) \leq \min_{x\in S}\vol((S-x) \vee (x-S)),\end{equation}
where $S$ is a simplex with $\vol(S)=\vol(K)$.
Here, $A\vee B$ denotes the convex hull of the sets $A$ and $B$.
Moreover, they showed that the right-hand side of \eqref{FR-Conj} is precisely $\binom{n}{[n/2]}\vol(S)$.
We conjecture the following generalization of~\eqref{FR-Conj}.

\begin{conj}\label{Conj_Gen-Far-Red}
For any convex body $K\subset \RR^n$ and every $\lambda\in [0,1]$, there exists $x\in K$ such that 
\begin{equation} \label{GFR-Conj}  \vol((1-\lambda)(K-x)\vee \lambda(x-K)) \le  \vol((1-\lambda) S\vee -\lambda S),\end{equation}
where $S$ is a centered simplex, and  $\vol(S) = \vol(K)$. 
\end{conj}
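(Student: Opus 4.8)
The plan is to make the right-hand side of \eqref{GFR-Conj} explicit and then to attack the inequality by a shadow-system reduction to the simplex case. For a centered simplex $S$ one has $-S\subseteq nS$, hence $-\lambda S\subseteq(1-\lambda)S$ precisely when $\lambda\le\frac{1}{n+1}$; on that range $(1-\lambda)S\vee(-\lambda S)=(1-\lambda)S$, of volume $(1-\lambda)^n\vol(S)$. A direct computation of the polytope $(1-\lambda)S\vee(-\lambda S)$ for $\lambda\in(\frac{1}{n+1},\frac{n}{n+1})$ shows that in general
\[
\vol\bigl((1-\lambda)S\vee(-\lambda S)\bigr)=\vol(S)\cdot\max_{0\le j\le n}\binom{n}{j}\lambda^{n-j}(1-\lambda)^j ,
\]
the breakpoints of the maximum occurring exactly at $\lambda=j/(n+1)$ (consistent with the threshold above), and the value at $\lambda=\frac{1}{2}$ being the F\'{a}ry--R\'{e}dei quantity $2^{-n}\binom{n}{[n/2]}\vol(S)$. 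Thus Conjecture \ref{Conj_Gen-Far-Red} is equivalent to: every convex body $K$ admits an $x\in K$ with $\vol\bigl((1-\lambda)(K-x)\vee\lambda(x-K)\bigr)\le\bigl(\max_{0\le j\le n}\binom{n}{j}\lambda^{n-j}(1-\lambda)^j\bigr)\vol(K)$, the case $\lambda=\frac{1}{2}$ being the F\'{a}ry--R\'{e}dei conjecture \eqref{FR-Conj}.

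The natural attempt is to take $x$ to be the centroid $c(K)$ and set $L=K-c(K)$, so that $0$ is the centroid of $L$ and one must bound $F(L):=\vol\bigl((1-\lambda)L\vee(-\lambda)L\bigr)$. Run a volume-preserving shadow system (parallel chord movement) $(L_t)$ in some direction $\theta$. Its centroid moves affinely in $t$, so after subtracting it we may assume each $L_t$ still has centroid $0$. Writing $L_t=P_t(\widetilde L)$ for the canonical lift $\widetilde L\subset\RR^{n+1}$ and the associated projection $P_t$, linearity of $P_t$ yields $(1-\lambda)L_t=P_t\bigl((1-\lambda)\widetilde L\bigr)$ and $-\lambda L_t=P_t\bigl(-\lambda\widetilde L\bigr)$, whence $(1-\lambda)L_t\vee(-\lambda)L_t=P_t\bigl((1-\lambda)\widetilde L\vee(-\lambda)\widetilde L\bigr)$ is again a shadow system. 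By the basic convexity property of shadow systems the map $t\mapsto F(L_t)$ is convex; since $\vol(L_t)$ is constant, $t\mapsto F(L_t)/\vol(L_t)$ is convex and attains its maximum over the family at an endpoint.

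Combining this with the Rogers--Shephard-type reduction to simplices — any convex body can be steered to a simplex by finitely many volume-preserving chord movements, at each step passing to the endpoint that does not decrease $F/\vol$ — one would obtain that $\sup_L F(L)/\vol(L)$ is attained at a simplex. For a simplex, $L-c(L)$ is, up to a linear map, the centered simplex $S$, and $F/\vol$ is $GL_n$-invariant, so by the first paragraph this supremum equals $\max_{0\le j\le n}\binom{n}{j}\lambda^{n-j}(1-\lambda)^j$. This would prove \eqref{GFR-Conj} with $x=c(K)$ and with simplices as the unique equality case.

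The serious obstacles are twofold. First, and most essentially, it is unclear that the centroid — or any affine-equivariant, chord-movement-stable point such as the Santal\'{o} point of a suitable associated body — is an admissible choice of $x$; if one is forced instead to a body-dependent optimal $x$, the convexity step breaks, since an infimum over $x$ of functions convex in $t$ need not be convex. Second, one must verify that the shadow-system reduction genuinely terminates at simplices for this functional and in the maximizing direction; since the case $\lambda=\frac{1}{2}$ is exactly the still-open F\'{a}ry--R\'{e}dei conjecture, at least one of these points must be hard. Alternative routes that trade this difficulty for another include mimicking the integral-geometric proof of the Rogers--Shephard inequality from \cite{RS2} — expressing $F(L)$ as an integral of section-volumes of $(1-\lambda)L$ and $-\lambda L$ and applying a Brunn--Minkowski-type concavity estimate — or proving a log-concave functional strengthening generalizing Colesanti's difference function inequality and specializing it to indicator functions; each, however, ultimately confronts the same core problem of choosing $x$ consistently.
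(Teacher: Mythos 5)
This statement is a \emph{conjecture} in the paper: it is not proved there in general, and indeed it cannot be easy, since (as you correctly note) the case $\lambda=\tfrac12$ is exactly the open F\'ary--R\'edei conjecture \eqref{FR-Conj}. The paper only proves (i) the explicit evaluation of the right-hand side for a centered simplex (Lemma \ref{lem:t-simplex-vol}, via an enumeration of the facets of $(1-\lambda)S\vee-\lambda S$ --- your formula $\max_{0\le j\le n}\binom{n}{j}\lambda^{n-j}(1-\lambda)^{j}$ agrees with it, since the admissible $k$ in the lemma is precisely the mode of that sequence), (ii) the implication to Godbersen's conjecture (Theorem \ref{G.F.R-implies-G}), and (iii) the planar case (Theorem \ref{Thm_Planar-GFR}). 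So your proposal, which you honestly present as a strategy with acknowledged obstructions rather than a proof, cannot be ``checked against the paper's proof'': no such proof exists, and your text does not close the conjecture either.

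That said, your strategy is essentially the paper's planar argument pushed to higher dimensions, and it is worth seeing exactly where it stops working. In $\RR^2$ the paper takes $x$ to be the centroid, slides one vertex $x_2$ parallel to the chord $x_3-x_1$ joining its neighbours (a linear parameter system in the sense of \cite{RS3}), recentres by a translation which is linear in $t$ and parallel to the same direction, and uses convexity of $t\mapsto{\rm Area}((1-\lambda)K_t\vee-\lambda K_t)$ to push to an endpoint at which one vertex disappears; finitely many steps reach a triangle. Both of the gaps you flag are real and are precisely what blocks this in $\RR^n$, $n\ge3$: there is no analogous elementary vertex-removal scheme that provably terminates at a simplex while preserving volume and centroid, and it is not known that the centroid (or any canonical equivariant point) is an admissible choice of $x$ --- if $x$ must be optimized per body, the convexity along the shadow system is lost, exactly as you say. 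So the verdict is: your reduction of the right-hand side is correct and matches Lemma \ref{lem:t-simplex-vol}, your shadow-system plan reproduces the proof of Theorem \ref{Thm_Planar-GFR} for $n=2$, but for general $n$ the statement remains a conjecture and your proposal does not (and does not claim to) prove it.
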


Note that the case $\lambda=1/2$ implies the above mentioned conjecture by F\'{a}ry and R\'{e}dei (since for the simplex, the convex hull of minimal volume is attained when it is centered, see \cite{Einhorn}).
The numerical value of the right-hand side of~$(\ref{GFR-Conj})$ can be computed explicitly (see Section \ref{sec:OCIG}), so that Conjecture 
\ref{Conj_Gen-Far-Red} would give a  numerical upper bound for the quantity on the left-hand side of~$(\ref{GFR-Conj})$. 
Moreover, we remark that in dimension $n=2$, Conjecture 
\ref{Conj_Gen-Far-Red}  holds true (see Section \ref{sec:R2} for a discussion of the planar case).  

Our first result in this paper states that Conjecture  \ref{Conj_Gen-Far-Red} implies Godbersen's conjecture. 

\begin{thm} \label{G.F.R-implies-G}
	Conjecture \ref{Conj_Gen-Far-Red} implies Conjecture \ref{conj:god}.
\end{thm}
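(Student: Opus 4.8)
The plan is to couple Conjecture \ref{Conj_Gen-Far-Red} with a \emph{sharp} lower bound for the volume of the convex hull $(1-\lambda)(K-x)\vee\lambda(x-K)$ in terms of the single mixed volume $V(K[j],-K[n-j])$, and then to use that mixed volumes are translation invariant, so that such a bound need not know which translate is optimal in Conjecture \ref{Conj_Gen-Far-Red}.

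In more detail, fix $1\le j\le n-1$. Since the mixed volume is translation invariant, $V\big((K-x)[j],(x-K)[n-j]\big)=V(K[j],-K[n-j])$ for every $x\in\RR^n$. Suppose that for some $\lambda=\lambda(j)\in(0,1)$ one can prove an inequality of the form
\begin{equation}\label{eq:keyplan}
\vol\big((1-\lambda)(K-x)\vee\lambda(x-K)\big)\ \ge\ \psi_j(\lambda)\,V(K[j],-K[n-j])\qquad\text{for all }x\in K,
\end{equation}
with $\psi_j(\lambda)>0$ a constant depending only on $n,j,\lambda$, and with \eqref{eq:keyplan} an \emph{equality} when $K$ is a simplex and $x$ is its centroid. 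Feeding into \eqref{eq:keyplan} the point $x$ provided by Conjecture \ref{Conj_Gen-Far-Red} gives $\psi_j(\lambda)\,V(K[j],-K[n-j])\le\vol\big((1-\lambda)S\vee-\lambda S\big)$, where $S$ is a centred simplex with $\vol(S)=\vol(K)$. Applying the equality case of \eqref{eq:keyplan} to $S$ itself, and using $V(S[j],-S[n-j])=\binom nj\vol(S)$ (the affine-invariant ratio $V(S[j],-S[n-j])/\vol(S)$ takes the same value for all simplices, namely $\binom nj$, which is the equality case of Conjecture \ref{conj:god} verified in \cite{God}), together with the fact that $\vol\big((1-\lambda)S\vee-\lambda S\big)/\vol(S)$ is likewise the same for all centred simplices (this is the quantity computed in Section \ref{sec:OCIG}), one gets $\psi_j(\lambda)\binom nj\vol(S)=\vol\big((1-\lambda)S\vee-\lambda S\big)$. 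Substituting back and dividing by $\psi_j(\lambda)>0$ yields exactly $V(K[j],-K[n-j])\le\binom nj\vol(K)$. Note that no optimisation over $\lambda$ is required: any $\lambda$ for which \eqref{eq:keyplan} can be established works, and the only point of the choice $\lambda(j)$ is to land in a regime where the estimate is both true and provable. For $\lambda$ close to $0$ or $1$, $(1-\lambda)(K-x)\vee\lambda(x-K)$ differs negligibly from a translate of $(1-\lambda)K$ and \eqref{eq:keyplan} degenerates into Conjecture \ref{conj:god} itself, so one must work in an intermediate range where the convex hull genuinely exceeds each of its two pieces.

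The crux of the proof is therefore the convex-hull volume estimate \eqref{eq:keyplan}, which should be seen as a refinement of the Rogers--Shephard inequality \eqref{eq:diff-body-ineq} that isolates one term of $\vol(K-K)=\sum_{i}\binom ni V(K[i],-K[n-i])$ rather than their weighted sum. I would attack it through the description of the convex hull as the union of the Minkowski averages of its two pieces,
\[
(1-\lambda)(K-x)\vee\lambda(x-K)=\bigcup_{s\in[0,1]}\Big((1-s)(1-\lambda)(K-x)+s\lambda(x-K)\Big),
\]
and bound the volume of this union. The obvious estimates are too weak: controlling the volume of the union by that of a single average, or by the $(n+1)$-dimensional volume of the cone $\{(z,s):z\in(1-s)(1-\lambda)(K-x)+s\lambda(x-K),\ s\in[0,1]\}$ divided by its (unit) width, are both valid but \emph{not} tight for simplices, and hence would only deliver Godbersen's bound up to a multiplicative constant --- which is precisely the flavour of the second, weaker, result of this paper. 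The sharp estimate must use the full geometry of the union; the natural tool is a Brunn--Minkowski / rearrangement argument applied fibrewise --- equivalently, a Colesanti-type inequality for the associated difference function --- quantifying how much the union overflows a single Minkowski average. I expect the hard part to be exactly this: obtaining \eqref{eq:keyplan} with the optimal constant $\psi_j(\lambda)$, and, crucially, \emph{uniformly in $x$} (not merely at the centroid), so that it can be combined with the a priori unknown optimal translate of Conjecture \ref{Conj_Gen-Far-Red}.

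For the equality statement: if $V(K[j],-K[n-j])=\binom nj\vol(K)$ for some $1\le j\le n-1$, then every inequality in the chain
\[
\psi_j(\lambda)\,V(K[j],-K[n-j])\ \le\ \vol\big((1-\lambda)(K-x)\vee\lambda(x-K)\big)\ \le\ \vol\big((1-\lambda)S\vee-\lambda S\big)
\]
is an equality; in particular \eqref{eq:keyplan} is saturated, and its equality case forces $K$ to be a simplex. Conversely, simplices realise equality in Conjecture \ref{conj:god} by the computation recalled above. This finishes the argument modulo the estimate \eqref{eq:keyplan}, which carries all the content.
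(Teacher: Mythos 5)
Your reduction has the same outer shape as the paper's argument (translation invariance of mixed volumes, a lower bound for $\vol\bigl((1-\lambda)(K-x)\vee\lambda(x-K)\bigr)$ in terms of $V(K[j],-K[n-j])$ that is uniform in $x$, the conjecture applied at one value of $\lambda$, and comparison with the simplex), but it defers the entire content to your postulated estimate with constant $\psi_j(\lambda)$, and your diagnosis of that estimate is mistaken. No Colesanti-type or fibrewise Brunn--Minkowski inequality is needed: the bound you want is the one-line monotonicity property of mixed volumes. Since $(1-\lambda)(K-x)$ and $\lambda(x-K)$ are both contained in $C:=(1-\lambda)(K-x)\vee\lambda(x-K)$, monotonicity and homogeneity give
\[
(1-\lambda)^j\lambda^{n-j}\,V(K[j],-K[n-j])\;=\;V\bigl((1-\lambda)(K-x)[j],\,\lambda(x-K)[n-j]\bigr)\;\le\;\vol(C),
\]
so your inequality holds with $\psi_j(\lambda)=(1-\lambda)^j\lambda^{n-j}$, for every $x$ and every $\lambda$; the uniformity in $x$ you flag as the hard point is automatic. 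The ``obvious'' bounds you reject (a single Minkowski average, or the cone) are indeed lossy, but they are not the relevant elementary bound; by dismissing them you concluded that a sharp functional inequality is required, whereas the functional machinery (Theorem \ref{thm:lambda-col-2-Func}) is what the paper uses for the \emph{weaker} Theorem \ref{thm:bound-for-God}, not for this implication.

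What actually carries the content---and is absent from your proposal---is the verification that the trivial bound above is tight on the centered simplex for a suitable $\lambda$, i.e.\ the exact evaluation of $\vol\bigl((1-\lambda)S\vee-\lambda S\bigr)$. The paper's Lemma \ref{lem:t-simplex-vol} computes, via a facet analysis, that this equals $\binom{n}{k}(1-\lambda)^k\lambda^{n-k}\vol(S)$ with $k$ determined by $(n+1)(1-\lambda)-1\le k\le(n+1)(1-\lambda)$; taking $\lambda=\frac{n+1-j}{n+1}$ forces $k=j$, and then the conjecture combined with the displayed bound yields $V(K[j],-K[n-j])\le\binom{n}{j}\vol(K)$ directly, with no need for the known value $V(S[j],-S[n-j])=\binom{n}{j}\vol(S)$. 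Your remark that ``no optimisation over $\lambda$ is required'' is therefore misleading: the bound with $\psi_j(\lambda)=(1-\lambda)^j\lambda^{n-j}$ holds for all $\lambda$, but equality on simplices---which your scheme needs in order to identify the constant---holds only at $\lambda=\frac{n+1-j}{n+1}$; any other $\lambda$ produces the strictly worse constant $\binom{n}{k}(1-\lambda)^{k-j}\lambda^{j-k}$ with $k\ne j$, so the specific choice of $\lambda$ is the crux, not a convenience. Finally, your identity $\psi_j(\lambda)\binom{n}{j}\vol(S)=\vol\bigl((1-\lambda)S\vee-\lambda S\bigr)$ presupposes the equality case of your estimate for the simplex, and proving that equality is precisely the simplex computation you never perform; likewise your equality-case paragraph assumes an equality characterization of an inequality you have not established. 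So the gap is concrete: the missing ingredients are the monotonicity bound with its explicit constant and the exact simplex evaluation at $\lambda=\frac{n+1-j}{n+1}$, not a new sharp functional inequality.
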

 
Our second result is, to the best of our knowledge, the smallest upper bound for $V(K[j], -K[n-j])$ currently known for $2<j<n-2$.
\begin{thm}\label{thm:bound-for-God}
	Let $K\subset \RR^n$ be a convex body, and $1\le j\le n-1$. Then 
	\[ V(K[j], -K[n-j])\le \frac{n^n}{j^j(n-j)^{n-j}}\vol(K)\simeq \binom{n}{j}\sqrt{2\pi \, \frac{j(n-j)}{n}}\vol(K). \] 
\end{thm}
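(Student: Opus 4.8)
The plan is to follow the proof of Theorem~\ref{G.F.R-implies-G}, but with Conjecture~\ref{Conj_Gen-Far-Red} replaced by an unconditional upper bound coming from a functional inequality. The first ingredient is a purely elementary ``extraction'' step. Since mixed volumes are translation invariant, replace $K$ by $K'=K-x$ with $0\in\mathrm{int}(K')$. For any $\lambda\in(0,1)$ and $t\in[0,1]$, the convex hull $\conv\big((1-\lambda)K'\cup\lambda(-K')\big)$ contains the Minkowski combination $(1-t)(1-\lambda)K'+t\lambda(-K')$; expanding the volume of the latter by Minkowski's formula~\eqref{Eq_Vol-Def} and keeping only the $i=j$ summand (all mixed volumes being nonnegative) gives
\[
\vol\big((1-\lambda)(K-x)\vee\lambda(x-K)\big)\ \ge\ \binom nj\big((1-t)(1-\lambda)\big)^{j}(t\lambda)^{n-j}\,V(K[j],-K[n-j]).
\]
Choosing $t=(n-j)/n$, which maximizes $(1-t)^{j}t^{n-j}$ and turns it into $j^{j}(n-j)^{n-j}/n^{n}$, one obtains, for every $x\in K$ and every $\lambda\in(0,1)$,
\[
V(K[j],-K[n-j])\ \le\ \frac{n^{n}}{\binom nj\,j^{j}(n-j)^{n-j}\,(1-\lambda)^{j}\lambda^{n-j}}\;\vol\big((1-\lambda)(K-x)\vee\lambda(x-K)\big).
\]

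It remains to bound $\vol\big((1-\lambda)(K-x)\vee\lambda(x-K)\big)$, for a judicious choice of $x$, by $C(\lambda,n)\,\vol(K)$ with an explicit $C(\lambda,n)$. This is where the announced functional inequality enters: it is a one-parameter (weight $\lambda$) generalization of Colesanti's difference function inequality, and applied to $\mathbf 1_{K}$ (or to a suitable log-concave surrogate of $K$) it furnishes both the centering point $x$ and the estimate, with $C(\lambda,n)$ determined by the extremal --- in the limit, Gaussian rather than simplicial --- profiles of the inequality. Substituting this estimate into the displayed bound and minimizing over $\lambda\in(0,1)$ is then a short optimization: the minimizing $\lambda$ is comparable to $(n-j)/n$, the powers $(1-\lambda)^{j}\lambda^{n-j}$ together with the binomial cancel against $n^{n}/\big(j^{j}(n-j)^{n-j}\big)$, and the surviving estimate is exactly $V(K[j],-K[n-j])\le\big(n^{n}/j^{j}(n-j)^{n-j}\big)\vol(K)$. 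The asymptotic reformulation $\binom nj\sqrt{2\pi j(n-j)/n}$ follows from Stirling's approximation of $n^{n}/\big(j^{j}(n-j)^{n-j}\big)$.

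The extraction step and the final optimization are routine; the heart of the matter --- and the step I expect to be the main obstacle --- is proving the functional inequality with a constant $C(\lambda,n)$ sharp enough to produce the exponent profile $n^{n}/\big(j^{j}(n-j)^{n-j}\big)$ after optimization. Concretely, I would introduce the ``weighted difference function'' of a log-concave $f$ (the analogue with asymmetric weights $\lambda,1-\lambda$ of the infimal-convolution construction underlying $v\mapsto\vol(K\cap(K+v))$), verify its log-concavity via the Pr\'ekopa--Leindler inequality, and then run a Rogers--Shephard-type integration (in the spirit of the standard proof of the difference body inequality~\eqref{eq:diff-body-ineq}) to bound its integral by $C(\lambda,n)\int f$. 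The delicate points are carrying the asymmetric weights through this integration without degrading the constant, and transferring the functional statement back to the geometric estimate for $K$ with the correct centering $x$ --- exactly the analogue of how one passes, in Theorem~\ref{G.F.R-implies-G}, from Conjecture~\ref{Conj_Gen-Far-Red} to Conjecture~\ref{conj:god}.
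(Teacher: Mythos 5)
Your outline (bound $V(K[j],-K[n-j])$ by the volume of $(1-\lambda)(K-x)\vee\lambda(x-K)$, then optimize over $\lambda$) matches the paper's strategy, but the quantitative bookkeeping exposes a real gap at exactly the step you flag as the heart of the matter. With your extraction (keeping the $i=j$ term of the Minkowski expansion of $(1-t)(1-\lambda)K'+t\lambda(-K')$, at $t=(n-j)/n$), the chain reads
\[
V(K[j],-K[n-j])\ \le\ \frac{n^{n}}{\binom{n}{j}\,j^{j}(n-j)^{n-j}}\cdot\frac{C(\lambda,n)}{(1-\lambda)^{j}\lambda^{n-j}}\,\vol(K),
\]
where $C(\lambda,n)$ is your hoped-for constant in $\vol((1-\lambda)(K-x)\vee\lambda(x-K))\le C(\lambda,n)\vol(K)$. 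To land on $\frac{n^{n}}{j^{j}(n-j)^{n-j}}\vol(K)$ you therefore need $C(\lambda,n)\le\binom{n}{j}(1-\lambda)^{j}\lambda^{n-j}$ for some $\lambda$, and the best case $\lambda=(n-j)/n$ gives $\binom{n}{j}j^{j}(n-j)^{n-j}/n^{n}\approx 1/\sqrt{2\pi j(n-j)/n}<1$. By Lemma \ref{lem:t-simplex-vol}, this number is exactly $\vol((1-\lambda)S\vee-\lambda S)/\vol(S)$ for the centered simplex (with $k=j$): what you are asking of your ``weighted difference function'' inequality is precisely the relevant case of the open Conjecture \ref{Conj_Gen-Far-Red}, which by Theorem \ref{G.F.R-implies-G} would already yield Godbersen's bound $\binom{n}{j}$ when combined with a lossless extraction. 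No Pr\'ekopa--Leindler or Rogers--Shephard-type integration is known to produce a constant below $1$ here; the inequality that such arguments do give (the paper's $\lambda$-difference function inequality, Theorem \ref{thm:lambda-col-2-Func}, applied to $f=e^{-h_{K^\circ}}$, leading to Theorems \ref{thm:strange}, \ref{thm:K-L-vol} and \ref{thm:ab-vol}) yields only $C(\lambda,n)=1$, and its extremizers are exponential-on-an-orthant (simplicial), not Gaussian as you suggest.

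There is a second, compounding loss: your extraction via a single term of the Minkowski expansion costs the factor $\binom{n}{j}(1-t)^{j}t^{n-j}$. The paper avoids this entirely by monotonicity of mixed volumes: since $(1-\lambda)K$ and $-\lambda K$ are both contained in $M=(1-\lambda)K\vee-\lambda K$, one has $V((1-\lambda)K[j],-\lambda K[n-j])\le\vol(M)$ with no binomial factor, hence $V(K[j],-K[n-j])\le\frac{1}{(1-\lambda)^{j}\lambda^{n-j}}\vol(M)$, and then the provable $C=1$ statement $\vol((1-\lambda)K\vee-\lambda K)\le\vol(K)$ for $0\in K$ (Theorem \ref{thm:ab-vol}, a special case of Theorem \ref{thm:K-L-vol}) gives, at $\lambda=(n-j)/n$, exactly $\frac{n^{n}}{j^{j}(n-j)^{n-j}}\vol(K)$. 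With your lossy extraction and the provable $C=1$ you would only obtain a bound of order $\binom{n}{j}\cdot\frac{2\pi j(n-j)}{n}\vol(K)$, weaker than the theorem by roughly $\sqrt{2\pi j(n-j)/n}$. So the repair is: replace the one-term Minkowski expansion by mixed-volume monotonicity, and replace the conjectural sharp constant by the inequality that is actually available, namely Theorem \ref{thm:ab-vol} (which still requires proof, via Theorem \ref{thm:K-L-vol} or the $\lambda$-difference function inequality, and is where the real work of the paper lies).
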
 

The proof requires some preparation. 
 Rogers and Shephard  showed in \cite{RS2} that if $0\in K$, then
\begin{equation} \label{RS-ineq-conv-hull} \vol(K \vee  -K )\le 2^n \vol(K), \end{equation}
and that the bound is attained only when $K$ is a simplex with $0$ as a vertex. Another proof for this bound was given by Colesanti in \cite{Colesanti} (see  Section \ref{sec:Colesanti} below). Colesanti's proof is based on a functional analogue of the difference body, which he calls the ``difference function''.
Using monotonicity of mixed volumes,  $(\ref{RS-ineq-conv-hull})$ implies that for every $1\leq j \leq n-1 $, 
\[ V(K[j], -K[n-j]) \le \vol(K \vee -K)  \le 2^n \vol(K).\]

Our next result  generalizes inequality~$(\ref{RS-ineq-conv-hull})$.
\begin{thm}\label{thm:ab-vol}
For any convex body $K\subset \RR^n$ containing the origin and every $\lambda\in[0,1]$
\begin{equation} \label{ineq-needed-for-God} \vol((1-\lambda)K \vee -\lambda K) \le \vol(K).\end{equation}
\end{thm}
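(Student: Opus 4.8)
The plan is, in the spirit of Colesanti's proof of \eqref{RS-ineq-conv-hull}, to exhibit $C:=(1-\lambda)K\vee(-\lambda K)$ as a linear image of an explicit convex body in $\mathbb R^{2n}$, and then to run a Rogers--Shephard type slicing argument on that body. The cases $\lambda\in\{0,1\}$ are trivial, and by approximation (replace $K$ by $K+\varepsilon B_2^n$ and let $\varepsilon\to0$, using Hausdorff continuity of the relevant operations and of the volume) we may assume $\lambda\in(0,1)$ and $0\in\operatorname{int}K$. Let $g=\|\cdot\|_K$ be the gauge of $K$. Since the gauge of a convex hull is the infimal convolution of the gauges, a short computation gives
\[
C=\Big\{\,u-w\ :\ u,w\in\mathbb R^n,\ \ \tfrac{g(u)}{1-\lambda}+\tfrac{g(w)}{\lambda}\le 1\,\Big\}.
\]
Accordingly, set $\Omega:=\big\{(u,w)\in\mathbb R^n\times\mathbb R^n:\ \tfrac{g(u)}{1-\lambda}+\tfrac{g(w)}{\lambda}\le 1\big\}$, a convex body in $\mathbb R^{2n}$, and let $T\colon\mathbb R^{2n}\to\mathbb R^n$ be $T(u,w)=u-w$, so that $C=T(\Omega)$.

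Two computations on $\Omega$ are needed. First, rescaling $u=(1-\lambda)\tilde u$, $w=\lambda\tilde w$ and then using the elementary polar-coordinate identity $\int_K(1-g(x))^n\,dx=\vol(K)/\binom{2n}{n}$, one obtains $\vol_{2n}(\Omega)=(\lambda(1-\lambda))^n\,\vol(K)^2/\binom{2n}{n}$. Second, integrating along the affine fibers $T^{-1}(x)=\{(u,u-x)\}$ of $T$ (the substitution $(u,w)\mapsto(x,u)$ with $x=u-w$ has Jacobian $\pm1$) yields
\[
\vol_{2n}(\Omega)=\int_{C}\vol_n(S_x)\,dx,\qquad S_x:=\Big\{u\in\mathbb R^n:\ \tfrac{g(u)}{1-\lambda}+\tfrac{g(u-x)}{\lambda}\le 1\Big\},
\]
where $S_x\neq\varnothing$ exactly when $x\in C$. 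At $x=0$ the constraint reads $g(u)\big(\tfrac1{1-\lambda}+\tfrac1\lambda\big)\le1$, i.e. $S_0=\lambda(1-\lambda)K$, so $\vol_n(S_0)=(\lambda(1-\lambda))^n\vol(K)$. Moreover, since $(x,u)\mapsto\tfrac{g(u)}{1-\lambda}+\tfrac{g(u-x)}{\lambda}$ is convex, the set $\{(x,u):u\in S_x\}\subset\mathbb R^{2n}$ is convex, and hence by Brunn--Minkowski the function $x\mapsto\vol_n(S_x)^{1/n}$ is concave on its support $C$.

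Now the Rogers--Shephard cone estimate closes the argument. If $f\ge0$ is concave on a convex body $C$ with maximum $M_0$, then dominating $f$ from below by the cone of height $M_0$ over $C$ gives $\int_C f^n\ge M_0^n\,\vol(C)/\binom{2n}{n}$ (one uses $\int_C(1-\|x-x_0\|_{C-x_0})^n\,dx=\vol(C)/\binom{2n}{n}$ with $x_0$ the apex; if $x_0\in\partial C$, approach it from the interior). Applying this with $f(x)=\vol_n(S_x)^{1/n}$ and inserting the value of $\vol_{2n}(\Omega)$ gives $M_0^n\,\vol(C)\le(\lambda(1-\lambda))^n\vol(K)^2$. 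The point where the constants cooperate is that one need \emph{not} locate the maximum of $f$: simply bounding $M_0\ge f(0)=\vol_n(S_0)^{1/n}=\lambda(1-\lambda)\vol(K)^{1/n}$ (the value at $0\in C$) already yields $M_0^n\ge(\lambda(1-\lambda))^n\vol(K)$, and therefore $\vol(C)\le(\lambda(1-\lambda))^n\vol(K)^2/M_0^n\le\vol(K)$.

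The only genuinely delicate points I anticipate are bookkeeping: the concavity and continuity of $x\mapsto\vol_n(S_x)^{1/n}$ up to $\partial C$ (a routine Brunn--Minkowski matter) and the approximation that removes the hypothesis $0\in\operatorname{int}K$. As a consistency check, equality should hold precisely for simplices having the origin as a vertex, which at $\lambda=\tfrac12$ recovers \eqref{RS-ineq-conv-hull} together with its equality case. Finally, one may phrase the whole statement functionally: writing $F=e^{-g}$ (so that $\vol(K)=\tfrac1{n!}\int F$ and $\vol(C)=\tfrac1{n!}\int e^{-G}$ with $G$ the gauge of $C$), inequality \eqref{ineq-needed-for-God} is equivalent to $\int_{\mathbb R^n}\sup_{u}F(u)^{1/(1-\lambda)}F(u-x)^{1/\lambda}\,dx\le\int_{\mathbb R^n}F$, a Colesanti-type difference-function inequality in which the exponents $1/(1-\lambda)$ and $1/\lambda$ are conjugate; the argument above is one route to it.
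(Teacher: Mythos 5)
Your argument is correct, and it is genuinely different from both proofs in the paper. The paper obtains \eqref{ineq-needed-for-God} as a special case of Theorem \ref{thm:K-L-vol}, proved either functionally (the $\lambda$-difference inequality, Theorem \ref{thm:lambda-col-2-Func}, specialized to $e^{-h_{K^\circ}},e^{-h_{L^\circ}}$ to get Theorem \ref{thm:strange}) or geometrically in the appendix via the $(2n+1)$-dimensional body $G(K,L)$ and two applications of the Rogers--Shephard section--projection lemma (Theorem \ref{lem:volBoundBySecProj}). You instead work with the $2n$-dimensional body $\Omega=\{(u,w):\, g(u)/(1-\lambda)+g(w)/\lambda\le1\}$, compute its volume by the beta-integral identity, and combine Brunn--Minkowski concavity of $x\mapsto\vol_n(S_x)^{1/n}$ with the Chakerian-style cone bound; all steps check out (the gauge of a convex hull is indeed the infimal convolution of the gauges, $S_0=\lambda(1-\lambda)K$, and $\int_K(1-g)^n=\vol(K)/\binom{2n}{n}$). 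One small streamlining: you never need the maximum $M_0$ at all — since the cone estimate $f(x)\ge f(x_0)\bigl(1-\|x-x_0\|_{C-x_0}\bigr)$ holds for \emph{any} apex $x_0\in C$, you can put the apex at $0$ directly and avoid the (harmless, but fussy) discussion of where and whether the supremum is attained and of boundary continuity. It is also worth noting that your construction generalizes with no extra effort: running the same argument with $\Omega=\{(u,w):g_L(u)+g_K(w)\le1\}$ and observing that $\{g_K+g_L\le1\}=(K^\circ+L^\circ)^\circ$ gives exactly Theorem \ref{thm:strange} (hence Theorem \ref{thm:K-L-vol}), so your route is an elementary geometric alternative to the functional proof; what it does not immediately deliver, compared with the appendix argument, is the equality analysis (which the paper extracts from the equality case of Theorem \ref{lem:volBoundBySecProj}), and compared with the functional route it does not yield the more general inequality \eqref{lambda-difference-function-theorem} for arbitrary log-concave $f,g$. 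Your closing functional reformulation with conjugate exponents $1/(1-\lambda)$, $1/\lambda$ is also correct, being a reparametrization of the specialization of Theorem \ref{thm:lambda-col-2-Func} used in the paper.
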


Theorem \ref{thm:bound-for-God} is an immediate corollary of Theorem~\ref{thm:ab-vol}. 

\begin{proof}[{\bf Proof of Theorem \ref{thm:bound-for-God}:}] 
Let $1\le j\le n-1$, and set $\lambda = (n-j)/n$. Assume without loss of generality that $0\in K$. 
Since $(1-\lambda) K$ and $-\lambda K$ are contained in $(1-\lambda) K\vee -\lambda K$,  the monotonicity and homogeneity  properties of the mixed volume imply:
\begin{eqnarray*} V(K[j], -K[n-j]) & = &    \frac{1}{(1-\lambda)^j\lambda^{n-j}}V((1-\lambda)  K[j], -\lambda K[n-j])    \\
& \leq &  \frac{1}{(1-\lambda)^j\lambda^{n-j}}\vol((1-\lambda)  K\vee -\lambda K)
\le  \frac{1}{(1-\lambda)^j\lambda^{n-j}}\vol(K),
\end{eqnarray*} 
where the last inequality follows from Theorem~\ref{thm:ab-vol}. Plugging in our choice of $\lambda$ yields precisely the desired bound of the theorem.
\end{proof}

 Theorem~\ref{thm:ab-vol} follows as a special case (where $K= (1-\lambda)K'$, $L=\lambda K'$, and $\theta = \lambda$) from  the following theorem, which is a variation of a result by Rogers and Shephard \cite{RS2}.

\begin{thm}\label{thm:K-L-vol}
Let $K,L\subseteq \RR^n$ be two convex bodies such that $0\in K\cap L$.
For every $\theta\in[0,1],$   
\[ \vol(L \vee -K) \, \vol(\theta K\cap(1-\theta) L) \le  \vol(K)\vol(L). \]		
\end{thm}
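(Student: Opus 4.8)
The plan is to prove Theorem~\ref{thm:K-L-vol} by the Rogers--Shephard averaging technique, building a single convex body in $\RR^{n}$ whose volume is bounded above by $\vol(K)\vol(L)$ (after rescaling) and whose fiber-volumes, when integrated, reproduce the product $\vol(L\vee -K)\,\vol(\theta K\cap(1-\theta)L)$. First I would introduce the body $C=(L\vee -K)$ and, for each $z\in C$, look at the set of ways to write $z$ as a convex combination of a point of $L$ and a point of $-K$, i.e. the fiber
\[
F(z)=\{\, t\in[0,1] \;:\; z\in t L\vee\ldots \text{no} \,\}.
\]
More precisely, the right object is the set $G(z)=\{(t,y):t\in[0,1],\ y\in\RR^n,\ -\,y\in K,\ \text{and } z-??\}$; concretely, writing a point of $L\vee -K$ as $z=(1-t)u+t(-v)$ with $u\in L$, $v\in K$, $t\in[0,1]$, one considers the ``graph'' body
\[
\widetilde C=\{(z,t)\in\RR^n\times[0,1] : z\in (1-t)L\vee\ldots\}\,,
\]
but the cleanest route (this is the Rogers--Shephard $\cite{RS2}$ move) is to define
\[
M=\{(y,t)\in\RR^n\times[0,1] : y\in -tK \ \text{and}\ \text{(the complementary point) }\in (1-t)L\}
\]
so that the projection of $M$ to the $t$-axis has fiber $(-tK)\cap$(shift of $(1-t)L$ by the running point $z$), and the projection to $\RR^n$ has fiber over $z$ equal to an interval of $t$'s. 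I would then compute $\vol_{n+1}(M)$ in two ways by Fubini.

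The two Fubini computations are the heart of it. Projecting onto the $z$-coordinate: for fixed $z\in L\vee -K$ the set of admissible $t$ is a subinterval $[a(z),b(z)]\subseteq[0,1]$ by convexity, and for the extreme parameter $\theta$ one gets that the $z$-section contains a translate of $(\theta K)\cap((1-\theta)L)$ scaled appropriately — this is where the factor $\vol(\theta K\cap(1-\theta)L)$ enters and where I must choose the parametrization so that $\theta$ is exactly the value realizing the intersection $\theta K\cap(1-\theta)L$. Projecting onto the $t$-coordinate: the $t$-section is (a translate of) $(1-t)L\cap$something of size controlled by $\vol(K)$ and $\vol(L)$, and integrating $\int_0^1 \vol_n(\text{section})\,dt$ against the polynomial volume bound gives the product $\vol(K)\vol(L)$ up to the correct constant. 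Combining the lower bound from the first computation with the upper bound from the second yields the stated inequality.

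The main obstacle I anticipate is bookkeeping the affine shifts and the scalars so that the constant comes out exactly as $\vol(K)\vol(L)$ with no spurious binomial-type factor — in the classical Rogers--Shephard convex-hull inequality \eqref{RS-ineq-conv-hull} one does pick up a $2^n$, and here the point of the refinement is that the extra free parameter $\theta$ lets one \emph{trade} that loss against the volume of the intersection $\theta K\cap(1-\theta)L$, which is precisely what makes \eqref{ineq-needed-for-God} sharp. So I would be careful to set up the body $M$ with the normalization built in from the start, rather than rescaling at the end. A secondary technical point is justifying measurability/convexity of the fibers and the reduction to the case where $0$ is an interior point of $K\cap L$ (otherwise pass to a limit), but these are routine. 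Once the two-dimensional ($n=1$, or rather the one-parameter) picture is drawn correctly, the general case follows verbatim by the same Fubini argument, since all sections are genuine convex bodies and the volume of $(1-t)L\vee -tK$ as a function of $t$ behaves polynomially in the relevant range.
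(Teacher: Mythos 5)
Your proposal, as written, has two genuine gaps. First, the central object $M$ is never actually defined --- your displays contain placeholders where the constraints should go --- and the version you gesture at lives in the wrong dimension. If you take the natural $(n+1)$-dimensional body $C=\{(y,t):t\in[0,1],\ y\in(1-t)L-tK\}=\conv\left(L\times\{0\},\,-K\times\{1\}\right)$, then its $t$-sections are Minkowski sums, so your ``upper bound'' Fubini computation gives $\int_0^1\vol_n((1-t)L-tK)\,dt=\frac{1}{n+1}\sum_{j=0}^n V(L[j],-K[n-j])$, i.e.\ exactly the mixed volumes one is trying to bound, and not anything controlled by $\vol(K)\vol(L)$; meanwhile its fibers over $z\in L\vee-K$ are one-dimensional intervals of $t$'s, which have no room to encode the $n$-dimensional factor $\vol_n(\theta K\cap(1-\theta)L)$. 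To make the product $\vol(K)\vol(L)$ come out of Fubini you must pass to the $(2n+1)$-dimensional body $G(K,L)=\{(x,y,\theta):\theta\in[0,1],\ x\in\theta K,\ x+y\in(1-\theta)L\}$, whose fibers in the $x$-variable are translates of $\theta K$, giving exactly $\vol_{2n+1}(G(K,L))=\vol(K)\vol(L)\,\frac{n!\,n!}{(2n+1)!}$, whose section at $y=0,\ \theta=\theta_0$ is $\theta_0K\cap(1-\theta_0)L$, and whose projection onto the $(y,\theta)$-coordinates is the body $C(K,L)$ above.

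Second, even with the correct body, ``computing the volume in two ways by Fubini'' is not sufficient: the steps converting a section and a projection into a volume inequality are not Fubini identities but the Rogers--Shephard section--projection inequality (Theorem \ref{lem:volBoundBySecProj}), $\frac{j!(n-j)!}{n!}\vol_j(H)\vol_{n-j}(P)\le\vol_n(B)$, whose proof needs Schwarz symmetrization together with a Brunn--Minkowski type concavity argument. It must be applied twice: once to $G(K,L)$ (section $\theta_0K\cap(1-\theta_0)L$, projection $C(K,L)$) to obtain $\vol_{n+1}(C(K,L))\,\vol_n(\theta_0K\cap(1-\theta_0)L)\le\frac{1}{n+1}\vol(K)\vol(L)$, and once to $C(K,L)$ with the one-dimensional section over the origin --- this is the only place the hypothesis $0\in K\cap L$ enters, making that section a unit segment --- and projection $L\vee-K$, to obtain $\frac{1}{n+1}\vol_n(L\vee-K)\le\vol_{n+1}(C(K,L))$. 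Your sketch contains neither this lemma nor any substitute for it (the paper's other route is genuinely different: the functional inequality for the $\lambda$-difference function, Theorem \ref{thm:lambda-col-2-Func}, applied to $f=e^{-h_{K^\circ}}$, $g=e^{-h_{L^\circ}}$). Until the auxiliary body is pinned down in dimension $2n+1$ and the symmetrization lemma is invoked or reproved, the argument does not close.
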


We will prove in the appendix that equality in Theorem
\ref{thm:K-L-vol} holds  if and only if $K$ and $L$ are simplices with
a common vertex at the origin and  such that $(1-\theta)L = \theta K$.
Likewise, in Theorem \ref{thm:ab-vol}, equality holds if and only if $K$
is a simplex with a vertex at the origin. 

We offer two  different proofs of Theorem~\ref{thm:K-L-vol}. The first follows from a functional inequality which we turn now to describe. A second proof, which closely follows  Rogers and Shephard's original  argument from  \cite{RS2} and is more geometric in nature, is presented  in the appendix of this paper for completeness.
We start with the notion of a ``$\lambda$-difference
	function".
\begin{definition} \label{def:lambda-col-2-Func}
	Let $\lambda\in(0,1)$, and  $f,g \colon \RR^n\to\RR^+$. The $\lambda$-difference
	function $\Delta_\lambda^{f,g} :\RR^n\to\RR^+$ associated with $f$ and $g$ is defined by
	\[
	\Delta_\lambda^{f,g}(z) = \sup_{(1-\lambda)x+\lambda y = z}f^{1-\lambda}\left(\tfrac {x} {1-\lambda} \right)g^\lambda\left({\tfrac {-y} {\lambda}}\right).
	\]
\end{definition}  

\begin{thm}\label{thm:lambda-col-2-Func}
	Let $f,g:\RR^n\to\RR^+$ be log-concave functions, and $\lambda\in(0,1)$. Then
	\begin{equation} \label{lambda-difference-function-theorem}
  \int_{\RR^n} \Delta_\lambda^{f,g}   \int_{\RR^n} f^{\lambda} g^{1-\lambda} \leq  \int_{\RR^n} f  \int_{\RR^n} g.
	\end{equation}
\end{thm}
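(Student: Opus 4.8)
The plan is to reduce this to the classical Rogers--Shephard-type argument via the Prékopa--Leindler inequality, which is the standard engine for turning convolution/sup-convolution inequalities between log-concave functions into integral inequalities. First I would rewrite the left-hand side as a genuine product of two integrals and look for pointwise information relating the integrand of $\int \Delta_\lambda^{f,g}$ and $\int f^\lambda g^{1-\lambda}$ to those of $\int f$ and $\int g$. The key observation is that for a fixed point $w$, if we consider the function $z \mapsto \Delta_\lambda^{f,g}(z)$ and the function $w \mapsto f^\lambda(w) g^{1-\lambda}(-w)$, we should be able to combine them: for a suitable affine change of variables the product $\Delta_\lambda^{f,g}(z)\cdot f^\lambda(w)g^{1-\lambda}(-w)$ at points $z,w$ is dominated by $f(u)g(v)$ evaluated at points $u,v$ that are appropriate convex combinations. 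Concretely, if $z = (1-\lambda)x + \lambda y$ realizes (or nearly realizes) the supremum defining $\Delta_\lambda^{f,g}(z)$, then writing $u$ as a combination of $x/(1-\lambda)$ and $w$, and $v$ as a combination of $-y/\lambda$ and $-w$, log-concavity of $f$ and $g$ gives
\[
f^{1-\lambda}\!\left(\tfrac{x}{1-\lambda}\right) g^\lambda\!\left(\tfrac{-y}{\lambda}\right) \cdot f^\lambda(w)\, g^{1-\lambda}(-w) \;\le\; f(u)\, g(v),
\]
provided the exponents and the points are matched so that $u$ is a convex combination with weights summing to one and similarly for $v$. This is exactly the pointwise hypothesis needed for Prékopa--Leindler in the two-variable (i.e.\ $2n$-dimensional) formulation.

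The main technical step is to choose the change of variables on $\RR^n \times \RR^n$ correctly. I would set, for parameters to be determined, $u = \alpha\, \tfrac{x}{1-\lambda} + (1-\alpha) w$ and $v = \beta\,\tfrac{-y}{\lambda} + (1-\beta)(-w)$, and then demand that the Jacobian of the map $(z,w)\mapsto(u,v)$ (with $z$ encoding the degree of freedom $x$, say, and the supremum handled by monotonicity of the integral under sup-convolution) be a constant, and that the weights in the Prékopa--Leindler inequality come out to be $(1-\lambda)$ and $\lambda$ in the first factor and $\lambda$ and $(1-\lambda)$ in the second, so that the exponents $f^{1-\lambda} f^\lambda = f$ and $g^\lambda g^{1-\lambda} = g$ assemble correctly. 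Matching the two linear maps $x\mapsto u$ and $w \mapsto$ (the complementary variable) forces $\alpha$ and $\beta$ in terms of $\lambda$; I expect the natural choice to be $\alpha = 1-\lambda$, $\beta = \lambda$ or their reflections, giving $u = x + (1-\lambda)w - \dots$ after simplification. Once the substitution is pinned down, applying Prékopa--Leindler to the pair of functions $(z,w)\mapsto \Delta_\lambda^{f,g}(z) f^\lambda(w)g^{1-\lambda}(-w)$ against $f\otimes$ (something) and $g\otimes$ (something), or more precisely applying the inequality in the form ``if $h(\theta a + (1-\theta)b) \ge F(a)^\theta G(b)^{1-\theta}$ then $\int h \ge (\int F)^\theta (\int G)^{1-\theta}$'' with $\theta$ chosen appropriately, yields $\int \Delta_\lambda^{f,g} \cdot \int f^\lambda g^{1-\lambda} \le \int f \cdot \int g$ after unwinding the normalizing Jacobian constants, which should be exactly $1$ by the homogeneity built into the definition of $\Delta_\lambda^{f,g}$ (note the rescalings $x/(1-\lambda)$ and $-y/\lambda$ are there precisely to make volumes match).

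The hard part, and the place where care is genuinely required, is handling the supremum in the definition of $\Delta_\lambda^{f,g}$: one cannot directly substitute ``the optimal $x$'' as a smooth function of $z$. The clean way around this is to not fix $z$ at all, but instead to integrate over the full pair $(x,y)\in\RR^n\times\RR^n$ and use that $\int_{\RR^n}\Delta_\lambda^{f,g}(z)\,dz \le \int\int f^{1-\lambda}(x/(1-\lambda)) g^\lambda(-y/\lambda)$ is false in general — rather, one should realize $\int \Delta_\lambda^{f,g}$ as a Prékopa--Leindler output itself: $\Delta_\lambda^{f,g} = \big((1-\lambda)\cdot f(\tfrac{\cdot}{1-\lambda})\big) \star_\lambda \big(\lambda \cdot \widetilde g(\tfrac{\cdot}{\lambda})\big)$ in the Asplund sum sense, where $\widetilde g(t) = g(-t)$. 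Then the whole statement becomes an inequality purely about Asplund sums and products of log-concave functions, and I would prove it by a single application of Prékopa--Leindler in $\RR^{2n}$ to the four functions $f, g, \Delta_\lambda^{f,g}, f^\lambda g^{1-\lambda}$ arranged via the substitution above; the role of the $\theta$-intersection term in Theorem~\ref{thm:K-L-vol} is played here by the second factor $\int f^\lambda g^{1-\lambda}$. I expect the bookkeeping of exponents to be the only real obstacle, and it is resolved by testing the desired identity on Gaussians or indicator functions of convex bodies first to fix all the constants, then running the general argument.
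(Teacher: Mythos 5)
Your pointwise ingredient is essentially the right one: applying log-concavity of $f$ and of $g$ once each to dominate a product of four values $f^{1-\lambda}\bigl(\tfrac{x}{1-\lambda}\bigr)g^{\lambda}\bigl(\tfrac{-y}{\lambda}\bigr)f^{\lambda}(w)g^{1-\lambda}(\cdot)$ by $f(u)g(v)$ is exactly the content of the paper's inequality \eqref{eq-twice-convexity} (written there for $\varphi=-\log f$, $\psi=-\log g$). But the machinery you propose for converting this into the integral inequality is the wrong tool, and this is a genuine gap. The Pr\'ekopa--Leindler inequality only produces \emph{lower} bounds on the integral of a function that pointwise dominates a geometric mean; the theorem here is a reverse, Rogers--Shephard-type statement, i.e.\ an \emph{upper} bound on $\int\Delta_\lambda^{f,g}$. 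Realizing $\Delta_\lambda^{f,g}$ as an Asplund-sum/PL output, as you suggest, can only yield $\int \Delta_\lambda^{f,g}\ \geq\ \bigl((1-\lambda)^{1-\lambda}\lambda^{\lambda}\bigr)^n\bigl(\int f\bigr)^{1-\lambda}\bigl(\int g\bigr)^{\lambda}$ --- precisely the complementary estimate, which the paper records as a remark --- and no choice of the undetermined parameters $\alpha,\beta$ or of the Jacobian normalization turns a PL application into the desired $\leq$. In addition, with your choice of $v$ as a combination of $-y/\lambda$ and $-w$, the second factor you would produce is $\int f^{\lambda}(w)g^{1-\lambda}(-w)\,dw$, which is not the quantity $\int f^{\lambda}g^{1-\lambda}$ in the statement.

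What actually closes the argument is more elementary than PL: fix $z$, take a minimizer (or minimizing sequence) $(x^*,y^*)$ for $\delta_\lambda^{\varphi,\psi}(z)$, and apply convexity of $\varphi$ and $\psi$ once each, choosing the convex combinations so that two things happen simultaneously: (i) the two optimizer-dependent error terms carry the \emph{same} argument $y-x^*/\lambda$, so that after exponentiating, their product integrates in the free variable $y$ to $\int f^{\lambda}g^{1-\lambda}$ by translation invariance; and (ii) the dominating side becomes $f(\lambda y)\,g\bigl((1-\lambda)y-z/\lambda\bigr)$, which depends only on $(y,z)$ and not on $(x^*,y^*)$. This yields $\Delta_\lambda^{f,g}(z)\int f^{\lambda}g^{1-\lambda}\leq\int f(\lambda y)\,g\bigl((1-\lambda)y-z/\lambda\bigr)\,dy$ for every $z$, and then a single application of Fubini in $(y,z)$ factors the right-hand side into $\int f\cdot\int g$ (the scalings $\lambda^{\pm n}$ cancel). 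Your proposal never identifies this cancellation of the optimizer --- which is the actual crux --- and instead leaves the substitution ``to be determined'' inside a PL framework that points in the opposite direction; it also omits the limiting argument needed when the infimum defining $\delta_\lambda^{\varphi,\psi}(z)$ is not attained.
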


The proof of Theorem~\ref{thm:lambda-col-2-Func} is similar to
Colesanti's proof of Theorem 1.1 in \cite{Colesanti}. An immediate
corollary of Theorem~\ref{thm:lambda-col-2-Func}  is the following
result. Let $K^{\circ}$ stand for the polar body of $K$ (see definition
below).

\begin{thm} \label{thm:strange} Let  $K,L \subset {\mathbb R}^n$ be convex bodies. Then 
$$
\vol (K \vee - L ) \,
\vol ((K^\circ +  L^\circ)^\circ ) \le
\vol(K) \, \vol(L).$$
\end{thm}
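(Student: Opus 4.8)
The plan is to obtain Theorem~\ref{thm:strange} as a direct specialization of the functional inequality \eqref{lambda-difference-function-theorem} in Theorem~\ref{thm:lambda-col-2-Func}: I will choose $f$ and $g$ to be exponentials of suitably rescaled gauge functions, so that the four integrals there become the same multiple of the four volumes in the statement. Assume $0\in\mathrm{int}(K)\cap\mathrm{int}(L)$, so that $K^\circ$ and $L^\circ$ are convex bodies; write $\|\cdot\|_A$ for the gauge of a body $A$, and recall the elementary facts $\int_{\RR^n}e^{-\|z\|_W}\,dz=n!\,\vol(W)$ (for any convex body $W$ with $0$ in its interior) and $\|z\|_{tW}=t^{-1}\|z\|_W$. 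Now fix any $\lambda\in(0,1)$ and put $f(x)=e^{-\|x\|_K/\lambda}$, $g(x)=e^{-\|x\|_L/(1-\lambda)}$; both are log-concave since a gauge is convex. Three of the integrals are then immediate: $\int f=n!\,\lambda^n\vol(K)$, $\int g=n!\,(1-\lambda)^n\vol(L)$, and, since $f^\lambda g^{1-\lambda}=e^{-\|x\|_K-\|x\|_L}$, the support-function identity $\|x\|_K+\|x\|_L=h_{K^\circ}(x)+h_{L^\circ}(x)=h_{K^\circ+L^\circ}(x)=\|x\|_{(K^\circ+L^\circ)^\circ}$ yields $\int f^\lambda g^{1-\lambda}=n!\,\vol\big((K^\circ+L^\circ)^\circ\big)$. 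This identity is where the body $(K^\circ+L^\circ)^\circ$ enters the picture.

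The one computation that requires care is $\int\Delta_\lambda^{f,g}$. Unwinding Definition~\ref{def:lambda-col-2-Func}: the factors of $1-\lambda$ cancel to give $f^{1-\lambda}\!\left(\tfrac{x}{1-\lambda}\right)=e^{-\|x\|_K/\lambda}$, and the factors of $\lambda$ cancel, using $\|-w\|_L=\|w\|_{-L}$, to give $g^{\lambda}\!\left(\tfrac{-y}{\lambda}\right)=e^{-\|y\|_{-L}/(1-\lambda)}$. After the substitution $u=(1-\lambda)x$, $v=\lambda y$ (so the constraint reads $u+v=z$) the supremum defining $\Delta_\lambda^{f,g}(z)$ becomes $\exp\big(-\tfrac{1}{\lambda(1-\lambda)}\inf_{u+v=z}(\|u\|_K+\|v\|_{-L})\big)$, and recognising the infimal convolution of the two gauges as the gauge of their convex hull, $\inf_{u+v=z}(\|u\|_K+\|v\|_{-L})=\|z\|_{K\vee(-L)}$ (which follows from $\conv(K\cup(-L))=\bigcup_{s\in[0,1]}(sK+(1-s)(-L))$), we arrive at $\Delta_\lambda^{f,g}(z)=e^{-\|z\|_{K\vee -L}/(\lambda(1-\lambda))}$ and hence $\int\Delta_\lambda^{f,g}=n!\,(\lambda(1-\lambda))^n\vol(K\vee -L)$.

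Feeding the four values into \eqref{lambda-difference-function-theorem} gives $n!(\lambda(1-\lambda))^n\vol(K\vee -L)\cdot n!\,\vol((K^\circ+L^\circ)^\circ)\le n!\lambda^n\vol(K)\cdot n!(1-\lambda)^n\vol(L)$; cancelling the common factor $(n!)^2\lambda^n(1-\lambda)^n$ leaves precisely $\vol(K\vee -L)\,\vol((K^\circ+L^\circ)^\circ)\le\vol(K)\vol(L)$, and reassuringly the free parameter $\lambda$ has disappeared. (As a sanity check, the case $K=L$ recovers the Rogers--Shephard bound \eqref{RS-ineq-conv-hull}, since $(K^\circ+K^\circ)^\circ=\tfrac12 K$.) I expect the only real obstacle to be bookkeeping: tracking the rescalings by $\lambda$ and $1-\lambda$ correctly through $\Delta_\lambda^{f,g}$ — in particular it is the $-y/\lambda$ slot in Definition~\ref{def:lambda-col-2-Func} that produces $-L$ rather than $L$ — and verifying that the gauge/support-function calculus applies, i.e. that $0$ is interior to each of $K$, $L$, $K\vee -L$ and $(K^\circ+L^\circ)^\circ$; the borderline case where $0$ only lies on the boundary can be dealt with by approximation or simply excluded.
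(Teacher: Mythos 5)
Your proposal is correct and follows essentially the same route as the paper: both specialize Theorem~\ref{thm:lambda-col-2-Func} to exponentials of gauge functions (equivalently $e^{-h_{K^\circ}}$, $e^{-h_{L^\circ}}$), identify $\Delta_\lambda^{f,g}$ with the gauge of a convex hull via infimal convolution, and use $\int e^{-\|x\|_W}dx=n!\,\vol(W)$. The only differences are cosmetic: you pre-scale the bodies by $\lambda$ and $1-\lambda$ inside $f,g$ so the parameter cancels immediately, whereas the paper first derives $\vol((1-\lambda)K\vee-\lambda L)\,\vol((\lambda K^\circ+(1-\lambda)L^\circ)^\circ)\le\vol(K)\vol(L)$ and then rescales, and you verify the inf-convolution identity directly rather than through the Legendre-transform calculus $f\Box g=\calL(\calL f+\calL g)$.
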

Note that Theorem \ref{thm:strange} it strictly stronger than Theorem
\ref{thm:K-L-vol}. Indeed, one can readily check that for every $\theta
\in[0,1]$, one has $\theta K \cap (1-\theta) L \subseteq
(K^\circ+L^\circ)^\circ$.

\noindent {\bf Notations:}
A convex body with center of mass at the origin is said to be
\textit{centered}. Given a convex body $K \subseteq \RR^n$, we denote
by $h_K \colon {\mathbb R}^n \to {\mathbb R}$  its \textit {support
function}, that is, the 1-homogeneous convex function given by 
$ h_K(u) = \sup \{ \langle x, u \rangle \ | \  x \in  K \}$.
The \textit{polar body} of $K$ is defined as $K^\circ = \{y\in\RR^n\; |\; \iprod{x}{y}\le 1, \forall x\in K\}$, and is also a convex body.  The \textit{convex indicator function $\mathds{1}_K^\infty$ of $K$} is defined to be zero for $x \in K$ and $+ \infty$ otherwise. 
The \textit{Legendre transform} of   $\varphi \colon \RR^n\to\RR$  is given by $\calL \varphi(x)=\sup_{y\in\RR^n}(\iprod{x}{y}-\varphi(x))$. 
Finally, the \textit{inf convolution} of two functions
$f,g \colon \RR^n \to \RR$ is defined by $(f \square g)(z)=\inf_{x+y=z}
\left\{f(x)+g(y)\right\}$. 

\noindent{\bf Organization of the paper:} 
In Section \ref{sec:OCIG} we prove Theorem~\ref{G.F.R-implies-G}.
In Section \ref{sec:Colesanti} we prove Theorem
\ref{thm:lambda-col-2-Func} and its consequence, Theorem \ref{thm:strange}.
In Section \ref{sec:R2} we discuss the planar case of Conjecture
\ref{Conj_Gen-Far-Red}. Finally, in the appendix, we give for completeness
another proof of Theorem~\ref{thm:K-L-vol}, which is more geometric in
nature, and follows Rogers and Shephard's arguments from \cite{RS2}.

\noindent {\bf Acknowledgment:} The first and second named authors were
partially supported by ISF grant No. 247/11. The third named author was
partially supported by European Research Council grant Dimension 305629.
The fourth named author was partially supported by Reintegration Grant
SSGHD-268274, and by ISF grant No. 1057/10.

\section{Proof of Theorem~\ref{G.F.R-implies-G} }\label{sec:OCIG}

In this section we prove Theorem~\ref{G.F.R-implies-G}. 
We start with the following  lemma regarding the volume of the convex hull of homothetic copies of a simplex $S$ and of $-S$, which is the expression appearing in Conjecture \ref{Conj_Gen-Far-Red}. 

\begin{lem}\label{lem:t-simplex-vol}
	Let $S\subset \RR^n$ be a centered simplex. For $\lambda\in (0,1)$, let $k\in\N$ such that $(n+1)(1-\lambda) -1  \le k\le (n+1)(1-\lambda)$. Then   
	\begin{equation} \label{formula-conv-simplices}  \frac{\vol((1-\lambda)S\vee(-\lambda S))}{\vol(S)} =  \binom{n}{k}(1-\lambda)^k\lambda^{n-k}.\end{equation}
\end{lem}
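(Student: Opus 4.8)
The plan is to exploit the standard simplex presented as a slice of the positive orthant, so that the convex hull in question becomes a polytope whose volume can be computed directly. Concretely, I would realize a centered simplex $S$ (up to a volume-preserving affine map, which changes neither side of \eqref{formula-conv-simplices}) as follows: take the $n$-simplex $T = \operatorname{conv}\{e_1,\dots,e_{n+1}\}$ sitting in the hyperplane $\{\sum t_i = 1\} \subset \RR^{n+1}$, with vertices the standard basis vectors, and let $S = T - b$ where $b = \tfrac{1}{n+1}\sum e_i$ is its barycenter, so that the vertices of $S$ are $v_i = e_i - b$ and $\sum v_i = 0$. Then $(1-\lambda)S$ has vertices $(1-\lambda)v_i$ and $-\lambda S$ has vertices $-\lambda v_j = \lambda(b - e_j)$; the convex hull $P_\lambda := (1-\lambda)S \vee (-\lambda S)$ is the convex hull of these $2(n+1)$ points, all lying in the hyperplane through the origin parallel to $T$.

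The key step is to identify $P_\lambda$ as a combinatorial object whose volume is a sum of binomial-type terms, and then to see that in fact \emph{only one} such term survives. Working in the ambient hyperplane with coordinates $(t_1,\dots,t_{n+1})$, $\sum t_i = $ const, the point $(1-\lambda)v_i$ corresponds to $(1-\lambda)e_i + (\text{shift})$ and $-\lambda v_j$ to $\lambda e_j + (\text{shift})$ after translating back to the affine hyperplane $\sum t_i = 1 - \lambda$ — more precisely, $(1-\lambda)v_i + (1-\lambda)b$ has $i$-th coordinate $1-\lambda$ and the rest $0$, while $-\lambda v_j + (1-\lambda)b = (1-\lambda)b + \lambda e_j - \lambda b = (1-2\lambda)b + \lambda e_j$. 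I would instead translate so the second family becomes clean: the convex hull of $\{(1-\lambda)e_i\}_i \cup \{\text{(other family)}\}$. After the dust settles one finds $P_\lambda$ (suitably translated) equals $\{t \ge 0 : \sum t_i = 1-\lambda,\ t_i \le 1-\lambda\ \forall i\}$ intersected appropriately — i.e. a hypersimplex-like region — but because one family of vertices is $(1-\lambda)(e_i - b)$ and the other is $\lambda(e_j - b)$ with $(1-\lambda) + \lambda = 1$, the convex hull is exactly the set of $t$ in the hyperplane with $-\lambda \le t_i \le 1-\lambda$ for all $i$ (after centering). This is the hypersimplex-type slab whose volume, relative to the whole simplex $\{t \ge 0, \sum t_i = 1\}$, is governed by the Eulerian/inclusion–exclusion formula; the constraint that $k$ satisfy $(n+1)(1-\lambda) - 1 \le k \le (n+1)(1-\lambda)$ is exactly the statement that the slab lies strictly between two consecutive "levels," so that the inclusion–exclusion alternating sum collapses to the single monomial $\binom{n}{k}(1-\lambda)^k \lambda^{n-k}$.

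A cleaner route, which I would actually follow to avoid hypersimplex volume bookkeeping, is a direct decomposition: $P_\lambda$ is the convex hull of the two parallel simplices $(1-\lambda)S$ and $-\lambda S$ lying in a common hyperplane, with the second a negatively-scaled copy of the first. Choose a vertex ordering and triangulate $P_\lambda$ by the "staircase" triangulation of the prism-like hull of two homothetic simplices: the cells are indexed by the ways of interleaving the two vertex sets, and each nondegenerate cell is a simplex of volume proportional to $(1-\lambda)^{k}\lambda^{n-k}$ for the appropriate $k$. The point is that, because $-\lambda S = \lambda(-S)$ is a reflection, most interleavings produce \emph{degenerate} (zero-volume) cells, and the surviving ones all have the same index $k$ determined by the two-sided inequality on $k$; summing their multiplicities gives the binomial coefficient $\binom{n}{k}$. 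I expect the main obstacle to be precisely this combinatorial bookkeeping — setting up the triangulation of the convex hull of two reflected homothetic simplices and verifying which cells degenerate — together with the edge cases where $(n+1)(1-\lambda)$ is an integer (so $k$ could be taken at either endpoint); I would handle the latter by noting both choices of $k$ give the same value of the right-hand side since then the "other" binomial term vanishes (its factor $\lambda^{n-k}$ or $(1-\lambda)^{k}$ multiplies a zero-volume contribution), or by a direct continuity argument in $\lambda$.
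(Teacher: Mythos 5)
There is a genuine gap: the proposal never actually computes anything, and the one concrete geometric identification it commits to is false. You claim that (after centering) $(1-\lambda)S\vee(-\lambda S)$ equals the slab $\{t:\sum_i t_i=\mathrm{const},\ -\lambda\le t_i\le 1-\lambda\}$, i.e.\ a hypersimplex-type box slice, and that the constraint on $k$ means the slab sits between two consecutive levels so that an Eulerian/inclusion--exclusion sum collapses to one term. This is not the right polytope. Already for $n=2$, $\lambda=\tfrac12$: with $S=\conv\{e_i-b\}$, the hull of $\tfrac12 S$ and $-\tfrac12 S$ is the affinely regular hexagon with vertices $\pm\tfrac12(e_i-b)$ (area $\tfrac12\vol(S)$, as the lemma predicts), whereas the slab $\{|t_i|\le\tfrac12\}\cap\{\sum t_i=0\}$ is a hexagon whose vertices point in the directions $e_i-e_j$ and whose area is strictly larger; more structurally, the facet normals of the hull are not coordinate vectors (in the paper's notation they are the vectors $u_k$ with entries $-t$ and $1$), so no translate of the hull is a coordinate slab. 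Consequently the Eulerian-number route has no footing, and the fact that the answer is a single monomial rather than an alternating sum is not explained by a ``levels'' argument.

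Your fallback ``staircase triangulation'' route is closer in spirit to what is needed, but it asserts precisely the content of the lemma instead of proving it: which interleaving cells are nondegenerate, why the surviving ones all carry the same exponent $k$, why that $k$ is pinned down by $(n+1)(1-\lambda)-1\le k\le(n+1)(1-\lambda)$, and why their number is $\binom{n}{k}$ are all left as ``combinatorial bookkeeping''. Note also that staircase triangulations are adapted to products/joins of simplices in skew position, whereas here the two simplices lie in the same $n$-dimensional hyperplane, so the decomposition must be built differently; the paper does this by determining the facets of $S\vee(-tS)$ ($t=\lambda/(1-\lambda)$, reduced to $t\in[\tfrac1n,1]$): each facet is the hull of $k$ vertices $e_i$ and $n-k$ vertices of $-tS$ omitting one index, the support inequalities against the explicit normal $u_k$ force exactly the stated range of $k$, there are $(n+1)\binom{n}{k}$ such facets, and coning each over the centroid contributes relative volume $t^{n-k}/(n+1)$, which sums to $\binom{n}{k}t^{n-k}$. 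Without an argument of this kind (or an equivalent determination of the facial structure), the proposal does not establish the formula; your treatment of the boundary cases where $(n+1)(1-\lambda)$ is an integer (continuity in $\lambda$, or equality of the two candidate values) is fine, but it is peripheral to the missing core.
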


\begin{rem}{\rm
Usually, the above inequality determines $k=k(\lambda)$ uniquely. When it does not (i.e. there are two such $k$'s), the corresponding expressions on the right hand side of \eqref{formula-conv-simplices} coincide.
}
\end{rem}

\begin{proof}[{\bf Proof of Lemma \ref{lem:t-simplex-vol}}]
Note first that by symmetry, it is enough to assume that $\lambda \leq {\frac 1 2}$. Moreover, since $S$ is centered, for $\lambda \leq {\frac 1 {n+1}}$ one has $-\lambda S \subseteq (1-\lambda)S$  (see \cite{BonFenchel}, page 57), and hence~$(\ref{formula-conv-simplices})$  holds trivially. Thus, we can further assume that ${\frac 1 {n+1}} \leq \lambda$. 
Next, since
\begin{equation}\label{eq:t-and-lambda}
\vol((1-\lambda)S\vee(-\lambda S))=(1-\lambda)^n\vol(S\vee(-\tfrac{\lambda}{1-\lambda} S)),
\end{equation} it is enough to compute the quantity
$\frac{\vol(S\vee(-tS))}{\vol(S)}$ for $t=\frac{ \lambda}{ 1-\lambda}$.
In these notations, our assumptions become
$t \in [ \tfrac {1}{n}, 1 ]$ and $\frac{n+1}{1+t} -1 \le k\le \frac{n+1}{1+t}$.
Moreover, for simplicity, we may take $S$ to be the convex hull of $\{e_j\}_{j=1}^{n+1}$ (vectors of the standard basis of $\RR^{n+1}$) embedded in $\RR^{n+1}$, which has $a=(\frac{1}{n+1},\dots,\frac{1}{n+1})$ as its center of mass. We then wish to compute the $n$-dimensional volume of $K_t := S\vee S_t$, with $S_t$ being the convex hull of the vectors $v_j = (1+t)a-te_j$, where $ j=1,\dots,n+1$.

First, we study the facets of $K_t$. Consider $F_k={\rm conv}\{e_1,\dots ,e_k,v_{k+1},\dots,v_n\}$, for some $k\in\{1,\dots n\}$.  Note that 
$F_k$ lies in the intersection of the two hyperplanes $a+a^\perp$ and $e_1+(u_k)^\perp$, where $$u_k=\Bigl(\underset{k\text{ times}}{\underbrace{-t,\dots,-t}},\underset{n-k\text{ times}}{\underbrace{1,\dots,1}},\left(1+t\right)k-n\Bigr).$$ For $F_k$ to be a facet of $K_t$, we require that for all $j=1,\dots, n+1$, one has  $\iprod{e_j}{u_k} \ge -t$, $\iprod{v_j}{u_k} \ge -t$, and that $\langle u_k, \nu \rangle = -t$ only for $\nu$ which is a vertex of $F_k$. A direct computation shows that this holds if and only if 
 $\frac{n+1}{1+t} -1 < k < \frac{n+1}{1+t}$. Note that for all $j$, $e_j$ and $v_j$ never participate in the same facet, and hence every facet of    $K_t$ consists of either $n$ or $n+1$ vertices. By Remark \ref{rem:only-n-facets} below, we may assume that every facet of $K_t$ is of the form of $F_k$, i.e., consists of exactly $n$ vertices, $k$ of which are vertices of $S$.

The contribution of $F_k\vee \{a\}$ to the ratio $\frac{\vol(K_t)}{\vol(S)}$ can be easily computed by noticing that, by invariance under translation, $\vol_{n}(F_k\vee \{a\})=\vol_{n}(V)$, where $$V={\rm conv}\{0,e_1-a,\dots,e_k-a,-t(e_{k+1}-a),\dots,-t(e_{n}-a)\}.$$ Moreover, since
\begin{eqnarray*}
	\frac{\vol_{n}(V)|a|}{n+1} & = & \vol_{n+1}({\rm conv}\{0,e_1-a,\dots,e_k-a,-t(e_{k+1}-a),\dots,-t(e_{n}-a),a\})\\
	& = & t^{n-k}\vol_{n+1}({\rm conv}\{0,e_1-a,\dots,e_k-a,e_{k+1}-a,\dots,e_{n}-a,a\}),
\end{eqnarray*}
and $\frac{1}{n+1}\vol_{n}(S)|a| = \vol_{n+1}({\rm conv}\{0,e_1,\dots,e_{n+1}\})$, one has $$\frac{\vol_{n}(F_k\vee \{a\})}{\vol_{n} S}=t^{n-k}\det (e_1-a,\dots,e_{n}-a,a)=\frac{t^{n-k}}{n+1}.$$ Finally, summing over all of the facets of $K_t$, we get
$$\frac{\vol(K_t)}{\vol(S)}=(n+1)\binom{n}{k}t^{n-k}\frac{1}{n+1}=\binom{n}{k}t^{n-k},$$ 
which by the definition of $t$ and \eqref{eq:t-and-lambda}, gives the desired result.
\end{proof}

\begin{rem}\label{rem:only-n-facets}
{\rm Unless  $t =\frac{n+1-j}{j}$  (equivalently,  $\lambda=\frac{n+1-j}{n+1}$), for some $j\in \{ [\frac{n}{2}], \dots, n\}$, the value of $k$  (which is now uniquely determined) satisfies the strict inequality $\frac{n+1}{1+t} -1 < k < \frac{n+1}{1+t}$. Thus, in this case, for $F_k$ to be a facet of $K_t$ it must be the convex hull of $n$ vertices. From continuity of the resulting volume ratio formula, it suffices to handle this case only, as done in the proof of Lemma \ref{lem:t-simplex-vol}. 
}
\end{rem}

We are now in a position to prove Theorem~\ref{G.F.R-implies-G}.	
	
\begin{proof}[{\bf Proof of Theorem~\ref{G.F.R-implies-G}}] 
Let $K \subset {\mathbb R}^n$ be a convex body, and assume that
Conjecture \ref{Conj_Gen-Far-Red} holds. We wish to show that Conjecture
\ref{conj:god} holds as well. Note that our assumption is that for every
$\lambda \in [0,1]$ there exists $x\in K$ such that
$$\frac{\vol((1-\lambda)(K-x)\vee \lambda(x-K))}{\vol (K)} \le
  \frac{\vol((1-\lambda)S\vee -\lambda S)}{\vol (S)}           ,$$
where $S$ is a centered simplex. Using the same argument as in the proof
of Theorem \ref{thm:bound-for-God}, the monotonicity, homogeneity in each
argument, and translation invariance of the mixed volume yield, for every
$\lambda\in (0,1)$:
$$\frac{V(K[j],-K[n-j])}{\vol (K)}\le
  \frac{\vol((1-\lambda)(K-x)\vee \lambda (x-K))}
       { (1-\lambda)^j \lambda^{n-j} \vol (K)   }.$$
We may then use Lemma \ref{lem:t-simplex-vol} with $k=j$ and $\lambda
=\frac{n+1-j}{n+1}\in (0,1)$, together with the last two inequalities,
to get 
	$$\frac{V(K[j],-K[n-j])}{\vol (K)}\le \frac
	{\vol((1-\lambda)S\vee -\lambda S)}
	{(1-\lambda)^j \lambda^{n-j} \vol (S)} = \binom{n}{j}.$$
This completes the proof of Theorem~\ref{G.F.R-implies-G}.
\end{proof}

\begin{rem} {\rm
Note that in the above proof we apply Conjecture \ref{Conj_Gen-Far-Red} only for finitely many values of the parameter $\lambda$.  }
\end{rem}

\section{The $\lambda$-difference function}\label{sec:Colesanti} 

In \cite{Colesanti}, Colesanti introduced the so called ``difference function", which is a functional analogue of the difference body notion. He then proved a functional version of the difference body inequality \eqref{eq:diff-body-ineq}
and used it to provide yet another proof of~$(\ref{RS-ineq-conv-hull})$.
In this section, we generalize Colesanti's definition and, using similar methods, extend the main result of \cite{Colesanti}.

First, we recall our notion of a $\lambda$-difference function
(Definition~\ref{def:lambda-col-2-Func}). For two functions
$f,g:\RR^n\to\RR_+$, and $\lambda\in(0,1)$, the $\lambda$-difference
function $\Delta_\lambda^{f,g}$ of $f$ and $g$ is \[
	\Delta_\lambda^{f,g}(z) = \sup_{(1-\lambda)x+\lambda y = z}f^{1-\lambda}\left(\tfrac {x} {1-\lambda} \right)g^\lambda\left({\tfrac {-y} {\lambda}}\right).
	\]
Alternatively if $f=e^{-\varphi},\, g=e^{-\psi}$ we may write 
$$ \Delta_\lambda^{f,g}= e^{-\delta_\lambda^{\varphi,\psi}}, \ {\rm  where} \ \
 \delta_\lambda^{\varphi,\psi}(z)=\inf_{(1-\lambda)x+\lambda y=z}
\left\{(1-\lambda)\varphi(\tfrac {x} {1-\lambda})+\lambda\psi(\tfrac {-y} {\lambda}) \right\}.
$$

\begin{rem} {\rm 

\item 1) The $\lambda$-difference function is compatible
with translations, and multiplications by positive constants. More
precisely, denoting $f_a(x):=f(x+a)$, one has $\Delta_\lambda^{f_a,g_b}=
(\Delta_\lambda^{f,g})_{(1-\lambda)a+\lambda b}\,$ and
$\Delta_\lambda^{af,bg}= a^{1-\lambda}b^\lambda \Delta_\lambda^{f,g}$.

\item 2) Letting $\varPhi(x)=\varphi(x/(1-\lambda))$,
$\Psi(x)=\psi(-x/\lambda)$, we may apply the Pr\'{e}kopa-Leindler
inequality for the functions $e^{-\varPhi},e^{-\Psi}$, and
$e^{-{\delta_\lambda^{\varphi,\psi}}}$, and obtain the following estimate:
$$ \int_{\R^n} \Delta_\lambda^{f,g}  \geq \left((1-\lambda)^{1-\lambda}\lambda^\lambda\right)^n \left(\int_{\R^n}f\right)^{1-\lambda} \left(\int_{\R^n}g \right)^{\lambda}.$$
Theorem~\ref{thm:lambda-col-2-Func}, which we now turn to prove, gives
a complementary upper bound.
}
\end{rem}

\begin{proof}[{\bf Proof of Theorem \ref{thm:lambda-col-2-Func}:}] 
Assume that $f=e^{-\varphi},\,g=e^{-\psi}$, for some convex functions
$\varphi,\,\psi$. Let $z\in\RR^n$. First, assume that there exist
$x^*,y^*\in\RR^n$ such that $$(1-\lambda)x^*+\lambda y^*=z, \ {\rm and} \  \
\delta_\lambda^{\varphi,\psi}(z)=
(1-\lambda)\varphi(x^*/(1-\lambda))+
\lambda \psi(-y^*/\lambda).$$ Using the convexity of $\varphi$ and $\psi$, for every $y\in\RR^n$ one has
	\[
	\psi\left((1-\lambda)y-z/\lambda\right) \le (1-\lambda)\psi(y-x^*/\lambda))+\lambda\psi(-y^*/\lambda),
	\]
	and 
	\[
	\varphi\left(\lambda y\right) \le (1-\lambda)\varphi(x^*/(1-\lambda))+\lambda\varphi(y-x^*/\lambda).
	\]
Summing these two inequalities, we obtain
	\begin{equation} \label{eq-twice-convexity} 
	\psi\left((1-\lambda)y-z/\lambda\right)+\varphi\left(\lambda y\right)
	\le \delta_\lambda^{\varphi,\psi}(z) +
	\lambda\varphi(y-x^*/\lambda) +
	(1-\lambda)\psi(y-x^*/\lambda).
	\end{equation}
As this holds for every $y\in\RR^n$, we integrate~$(\ref{eq-twice-convexity})$ over $y\in\RR^n$ and obtain
	\begin{equation}\label{eq:z2}
	\Delta_\lambda^{f,g}(z)\int_{\RR^n}f^\lambda g^{1-\lambda}\le\int_{\RR^n}g\left((1-\lambda)y-z/\lambda\right)f\left(\lambda y\right)dy.
	\end{equation}
If the case where the infimum in the definition of $\delta_\lambda^{\varphi,\psi}(z)$ is not
attained, there are sequences $(x_j)_{j=1}^\infty, (y_j)_{j=1}^\infty
\subset\R^n$ such that for every $j$, $(1-\lambda) x_j+\lambda y_j=z$,
and $\delta_\lambda^{\varphi,\psi}(z)=\lim_{j\to\infty}
((1-\lambda)\varphi(x_j/(1-\lambda)) + \lambda\varphi(-y_j/\lambda))$.
Using the same argument as before, one has, $$f^{1-\lambda}(x_j/(1-\lambda))
g^\lambda(-y_j/\lambda)  \int_{\RR^n}f^\lambda g^{1-\lambda}\le  
\int_{\RR^n}g\left((1-\lambda)y-z/\lambda\right)f\left(\lambda y\right)dy,$$
and by taking the limit we get that \eqref{eq:z2} holds in this case as
well. We may therefore integrate this inequality with respect to  $z \in \RR^n$, and get
\begin{align*}
	\int_{\R^n}f^\lambda g^{1-\lambda} \int_{\R^n}\Delta_\lambda^{f,g} &\le \int_{\R^n}\left(\int_{\R^n}g\left((1-\lambda)y-z/\lambda\right)f\left(\lambda y\right)dy\right)dz \\
	&=\int_{\R^n}f\left(\lambda y\right)\left(\int_{\RR^n}g\left((1-\lambda)y-z/\lambda\right)dz\right)dy	\\
	&=\int_{\R^n}f\left(\lambda y\right)dy\int_{\RR^n}g\left(-z/\lambda\right)dz \\
	&=\left(\int_{\RR^n}f\right)\left(\int_{\RR^n}g\right).
\end{align*}
This completes the proof of Theorem \ref{thm:lambda-col-2-Func}.
\end{proof}

\begin{rem} {\rm
For $\lambda=1/2$ and $f=g$, Colesanti showed in \cite{Colesanti} that the bound~$(\ref{lambda-difference-function-theorem})$ is sharp and attained for the function
\[
g(x) = 
\begin{cases}
e^{-(x_1+\dots+x_n)}, & \text{if } x_j \ge 0, \quad \forall j=1,\dots, n\\
0,       & \text{otherwise}.
\end{cases}
\]
In fact, this function shows that the bound is sharp for every $\lambda \in (0,1)$. Indeed, one can easily verify that $\Delta_\lambda^{g,g}(z)=e^{-\sum_{i=1}^n|z_j|_\lambda}$, where
\[
|a|_\lambda =  
\begin{cases}
\frac{|a|}{1-\lambda}, & \text{if } a \ge 0 \\
\frac{|a|}{\lambda},       & \text{otherwise}.
\end{cases}
\]
Thus, a direct computation gives 
\[	
	\int_{\RR^n}\Delta_\lambda^{g,g}(z)dz=\int_{\RR^n}e^{-\sum|z_j|_\lambda}dz =
	\sum_{j=0}^n\binom{n}{j}\lambda^j(1-\lambda)^{n-j}\left(\int_{0}^\infty e^{-x}dx\right)^n=1=\int_{\RR^n}g.
\] }
\end{rem}

Theorem~\ref{thm:lambda-col-2-Func} provides, as an immediate corollary, a proof of Theorem \ref{thm:strange}.

\begin{proof}[{\bf Proof of Theorem \ref{thm:strange}}]
Let $K, L \subseteq \RR^n$ be two convex bodies, and without loss of
generality assume they contain the origin. Let $\lambda\in(0,1)$.
Using the homogeneity of the support function $h_{K^{\circ}}(x)$
(both in $K$ and in $x$), one has \[
	\delta_\lambda^{h_{K^\circ},h_{L^\circ}}(z)=\inf_{(1-\lambda)x+\lambda y=z}((1-\lambda)h_{K^\circ}(x/(1-\lambda))+\lambda h_{L^\circ}(-y/\lambda))=
	h_{\frac{1}{1-\lambda}K^\circ}\Box h_{-\frac{1}{\lambda}L^\circ}(z),
\]
where $f\Box g$ is the infimal convolution of $f$ and $g$. It is well known (see e.g.,~\cite{Schneider-book}, Section 1.7), that for lower semi continuous convex functions $f,g$ we have $f\Box g=\calL(\calL f+\calL g)$ and that $\calL h_K = \textbf{1}_K^\infty$. Thus, 
\begin{equation*} \label{eq:delta-h_K}
\begin{split}
	\delta_\lambda^{h_{K^\circ},h_{L^\circ}} & = \calL(\textbf{1}_{\frac{1}{1-\lambda}K^\circ}^\infty  +\textbf{1}_{-\frac{1}{\lambda}L^\circ}^\infty) = \calL(\textbf{1}_{\frac{1}{1-\lambda}K^\circ\cap-\frac{1}{\lambda}L^\circ}^\infty)  \\
& = h_{\frac{1}{1-\lambda}K^\circ\cap-\frac{1}{\lambda}L^\circ}
	=h_{((1-\lambda)K\vee-\lambda L)^\circ}.
\end{split}
\end{equation*}
Moreover, note that $(e^{-h_{K^\circ}})^\lambda
(e^{-h_{L^\circ}})^{1-\lambda}=e^{-h_{\lambda K^\circ+(1-\lambda)L^\circ}}$.
Thus, combining the fact that $\int_{\R^n}e^{-h_{K^\circ}}=n! \vol(K)$ with Theorem \ref{thm:lambda-col-2-Func} for  $f=e^{-h_{K^\circ}}$ and $g =e^{-h_{L^\circ}},$ yields
 $$
 \vol((1-\lambda)K \vee -\lambda L ) \, \vol ((\lambda K^\circ + (1-\lambda) L^\circ )^\circ) 
 \le
\vol(K) \, \vol(L),$$ or, equivalently,
\begin{equation*}
\vol(K \vee - L ) \, \vol (( K^\circ +  L^\circ )^\circ)  \le
\vol (K) \, \vol(L). \end{equation*}
The proof of Theorem \ref{thm:strange} is thus complete. 			
\end{proof}

\section{The planar case}\label{sec:R2}
While Godbersen's conjecture is clearly true for $n=2$ by the inclusion
$-K \subset 2K$ for centered convex regions in the plane,
the validity of the other two conjectures presented in Section
\ref{sec:intro} is not as self-evident. In this
section we assert the validity of Conjecture~\ref{Conj_Gen-Far-Red} when
$n=2$. We note that in the plane, the case $\lambda=\frac{1}{2}$ (along
with a characterization of the equality case) was first established by
Estermann \cite{Estermann}, and later by Levi \cite{Levi}, F\'{a}ry
\cite{Fary} and Yaglom and Boltyanski{\u\i} \cite{YaglomBolt}. We refer the
reader to \cite{Einhorn} for a detailed survey, which contains not only the
history of these problems and several proofs, but also similar related
problems dealing with intersections of the convex bodies $K$ and $-t K$.

We show that Conjecture \ref{Conj_Gen-Far-Red} holds in the
plane (for every $\lambda\in[0,1]$):

\begin{thm}\label{Thm_Planar-GFR}
Let $K\subseteq\RR^2$ be a convex body, and let $\lambda\in[0,1]$. If
$x$ is the center of mass of $K$, then $$
	{\rm Area}((1-\lambda)(K-x)\vee\lambda(x-K))\le
	{\rm Area}((1-\lambda)S\vee-\lambda S),$$
where $S$ is a centered triangle such that
${\rm Area}(K)={\rm Area}(S)$.
\end{thm}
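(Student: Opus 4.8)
The plan is to reduce the statement, via the homogeneity identity $(1-\lambda)(K-x)\vee\lambda(x-K)=(1-\lambda)\bigl((K-x)\vee\tfrac{\lambda}{1-\lambda}(x-K)\bigr)$ used already in \eqref{eq:t-and-lambda}, to a single–parameter problem: writing $t=\lambda/(1-\lambda)$ and translating so that $x=0$, it suffices to show that for every planar convex body $K$ centered at the origin,
\[
{\rm Area}\bigl(K\vee(-tK)\bigr)\le {\rm Area}\bigl(S\vee(-tS)\bigr)\qquad\text{for all }t\in[0,1],
\]
with $S$ a centered triangle of the same area, and by symmetry ($K\vee(-tK)$ versus $t(K\vee(-t^{-1}K))$) one may assume $t\in[0,1]$. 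For $t\le \tfrac12$ one has $-tK\subseteq K$ for centered planar bodies, so the left side is just ${\rm Area}(K)={\rm Area}(S)$ and, by Lemma~\ref{lem:t-simplex-vol}, the right side equals ${\rm Area}(S)$ too (the case $k=n=2$), so the inequality is an equality; the content is therefore entirely in the range $t\in[\tfrac12,1]$, where by Lemma~\ref{lem:t-simplex-vol} the target value is $\binom{2}{1}t\,{\rm Area}(S)=2t\,{\rm Area}(S)$, i.e. one must prove ${\rm Area}(K\vee(-tK))\le 2t\,{\rm Area}(K)$ for centered $K$ and $t\in[\tfrac12,1]$.

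The main step I would carry out is a direct geometric estimate of ${\rm Area}(K\vee(-tK))$ in the plane. The set $K\vee(-tK)$ decomposes into $K$, the reflected-and-scaled copy $-tK$, and a union of "bridging" triangles $[\,p,\, -tq\,]$ spanned by pairs of points $p\in\partial K$, $q\in\partial K$ at which the outer normals are opposite (so that the segment from $p$ to $-tq$ is an edge of the convex hull). Parametrising $\partial K$ by the outer normal direction $u\in S^1$, with support function $h=h_K$ and the standard formula ${\rm Area}(K)=\tfrac12\int_{S^1}h(h+h'')$, the bridging region contributes an integral over the semicircle of directions of an expression of the form $\tfrac12\bigl(h(u)+t\,h(-u)\bigr)^2\,(\text{something})$ minus the overlapping parts; assembling these pieces gives ${\rm Area}(K\vee(-tK))$ as an explicit functional of $h$ on $S^1$. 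The inequality ${\rm Area}(K\vee(-tK))\le 2t\,{\rm Area}(K)$ then becomes a quadratic-functional inequality in $h$ that is invariant under the substitution $h(u)\mapsto h(u)+\langle u,v\rangle$ (translation of $K$) — which is exactly why the centering hypothesis $x=$ center of mass must be used to fix the translation and rule out the degenerate blow-up. I would prove the reduced inequality by a compactness/variational argument: the ratio ${\rm Area}(K\vee(-tK))/{\rm Area}(K)$ over centered planar bodies attains its maximum, and a first-variation computation (perturbing $h$ by a smooth function on $S^1$, respecting the centering constraint via a Lagrange multiplier) forces the extremal $h$ to be piecewise linear, hence a polygon; a short finite check among centered polygons, which reduces further to triangles because adding vertices cannot increase the ratio in this range of $t$, then identifies the triangle as the maximiser and pins the maximal value at $2t$.

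An alternative, possibly cleaner, route is to apply Theorem~\ref{thm:ab-vol} (or Theorem~\ref{thm:K-L-vol}) to an affine image of $K$: since $K$ is centered, $-tK\subseteq (1-\lambda)/\lambda$–type inclusions are available, and one can try to write $(1-\lambda)(K-x)\vee\lambda(x-K)$ as $(1-\lambda)K'\vee(-\lambda K')$ for a body $K'$ containing the origin with ${\rm Area}(K')\ge{\rm Area}(K)/(\text{correction})$; however, Theorem~\ref{thm:ab-vol} only yields ${\rm Area}\le{\rm Area}(K')$ rather than the sharp triangle value $2t\,{\rm Area}(S)$, so in the regime $t\in[\tfrac12,1]$ it is too weak and the dedicated planar argument above seems unavoidable. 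I would therefore keep Theorem~\ref{thm:ab-vol} only as a sanity check and base the actual proof on the support-function computation plus the variational reduction to polygons.

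The hard part, I expect, is the variational/compactness step: showing that the extremal body is a polygon and then that it is a triangle. The first-variation analysis is delicate because the combinatorial structure of $K\vee(-tK)$ — which arcs of $\partial K$ survive, where the bridging segments attach — changes as $h$ is perturbed, so the functional $h\mapsto{\rm Area}(K\vee(-tK))$ is only piecewise smooth; one must argue that at a maximiser the "active" structure is stable enough to extract the piecewise-linearity conclusion, and then handle the centering constraint correctly so that the only surviving extremals are (possibly degenerate) triangles. Controlling this case analysis — and checking that among centered triangles the value is exactly ${\rm Area}(S\vee(-tS))=2t\,{\rm Area}(S)$, matching Lemma~\ref{lem:t-simplex-vol} with $n=2$, $k=1$ — is where the real work lies; everything else is bookkeeping with support functions and the two elementary inclusions $-tK\subseteq K$ (for $t\le\tfrac12$) and $K\subseteq -2K^{-1}$-type bounds used to restrict to $t\in[\tfrac12,1]$.
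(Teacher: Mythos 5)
Your reductions are sound as far as they go: normalizing by $(1-\lambda)^2$ and passing to $t=\lambda/(1-\lambda)\in[0,1]$ is legitimate, the range $t\le\tfrac12$ is indeed trivial because $-K\subseteq 2K$ for centered planar bodies, and Lemma~\ref{lem:t-simplex-vol} with $n=2$, $k=1$ does identify the target value $2t\,{\rm Area}(K)$ for $t\in[\tfrac12,1]$. The problem is that everything after that is a sketch of exactly the part that constitutes the theorem. Existence of a maximizer of ${\rm Area}(K\vee(-tK))/{\rm Area}(K)$ over centered bodies can be salvaged (the ratio is invariant under linear maps, which preserve the centroid, so one can work in John position), but the two decisive claims are asserted, not proved: (i) that a first-variation argument forces the extremal support function to be piecewise linear, hence the extremizer to be a polygon, and (ii) that ``adding vertices cannot increase the ratio,'' reducing polygons to triangles. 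Claim (ii) is literally the statement of the theorem restricted to polygons, so invoking it is circular unless you supply an argument; and claim (i) is genuinely delicate, as you yourself note: the functional $h\mapsto{\rm Area}(K\vee(-tK))$ is only piecewise smooth, a maximizer of such a functional under a centering constraint need not satisfy a pointwise Euler--Lagrange identity, and nothing in your outline excludes smooth extremizers. As written, the proposal is a plan whose central step is missing.

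For comparison, the paper closes precisely this step with a concrete mechanism that replaces your variational reduction: approximate $K$ by a centered polygon ${\rm Conv}\{x_1,\dots,x_N\}$ and move the vertex $x_2$ parallel to the chord $u=x_3-x_1$, translating back so that the centroid stays at the origin; this keeps both the area and the centroid fixed, and the family $(1-\lambda)K_t\vee-\lambda K_t$ is a linear parameter system (shadow system) in the sense of Rogers and Shephard, so its area is a convex function of $t$ and attains its maximum at an endpoint of the admissible interval, where one vertex of the polygon is absorbed into an adjacent edge. Iterating removes vertices one by one without decreasing ${\rm Area}((1-\lambda)K\vee-\lambda K)$ until a centered triangle is reached, whose value is the right-hand side by Lemma~\ref{lem:t-simplex-vol}. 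This convexity-along-shadow-systems argument is exactly the substitute for your first-variation and ``polygons reduce to triangles'' steps, and it also handles all $\lambda$ uniformly, making the case split at $t=\tfrac12$ unnecessary. If you incorporate that tool (or prove an equivalent monotonicity under vertex removal), your outline becomes a proof; without it, the argument does not go through.
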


\begin{proof}[{\bf Proof of Theorem \ref{Thm_Planar-GFR}}]
Without loss of generality, we may assume that $K$ is centered and has
unit area, and show that $${\rm Area}((1-\lambda)K\vee-\lambda K)\le
{\rm Area}((1-\lambda)S\vee-\lambda S).$$ We may further assume that $K$
is a polygon, and by a standard  continuity argument, the result will
hold for a general convex body.

We will describe an inductive process, in which we remove the vertices
of $K$ one by one until reaching a triangle. In each step, we replace
the body $K$ with a body $K'$ which has one vertex less, without
changing the area or the center of  mass, and  without decreasing the
area of $(1-\lambda)K\vee-\lambda K$. Let $K = {\rm Conv} \{x_1,\ldots,
x_N\}$, where $N \ge 4$. For $t\in\RR$, consider the body
$$\widetilde{K}_t= {\rm Conv} \{x_1,x_2 + tu,x_3,\ldots, x_N\},
\ {\rm with} \ u=x_3-x_1.$$ Denote by $l_1$ and $l_3$ the lines
containing the segments $[x_N,x_1]$ and $[x_3,x_4]$ respectively. Let
$\alpha < 0< \beta$ be such that $x_2 +\alpha u \in l_1$, and $x_2 +
\beta u \in l_3$. Note that for $t\in [\alpha, \beta]$ one has
$\widetilde{K}_t= {\rm Conv} \{x_1,x_3,\ldots, x_N\} \cup {\rm Conv}
\{x_1,x_2+tu,x_3\}$, and thus ${\rm Area}(\widetilde{K}_t) =
{\rm Area}(K)$. Moreover, the center of mass of $\widetilde{K}_t$ is
$\theta  t u$, where $\theta= \frac{1}{3} {\rm Area}({\rm Conv}
\{x_1,x_2,x_3\})$. Indeed, the center of mass of a union of two planar
sets is the average of the centers of mass, weighted  by their
respective areas. Denote $K_t = \widetilde{K}_t - \theta t u$. Note
that for $t \in [\alpha,\beta]$, $K_t$ has its center of mass at the
origin, and  ${\rm Area}(K_t) = {\rm Area}(K) = 1$.

		\begin{figure}[h]
			\centering
			\includegraphics[width=0.4\textwidth]{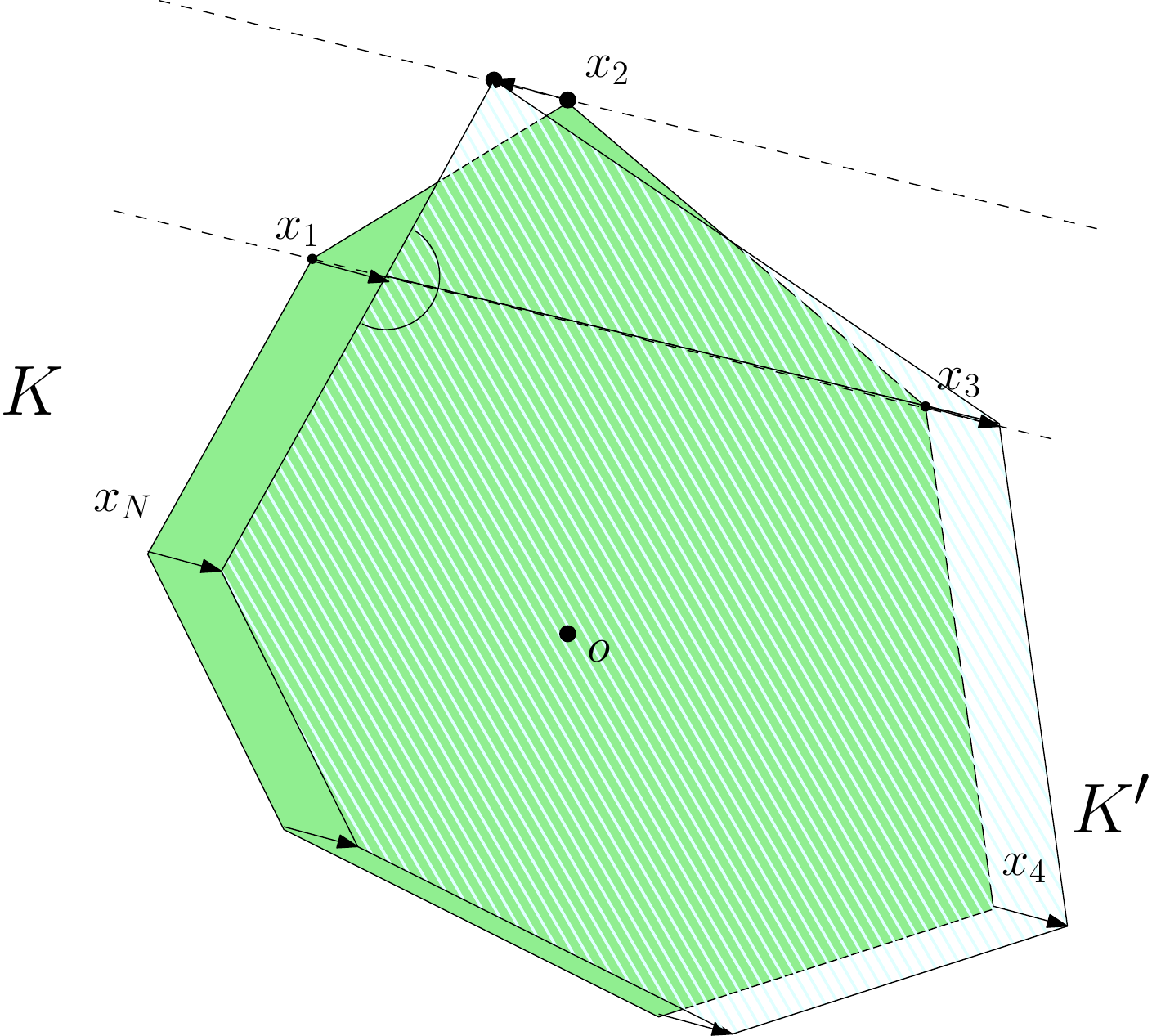}
			\caption{The area and center of mass are preserved, and $K'$ has one vertex less than $K$.}
			\label{fig:delete-vertex}
		\end{figure}

The family of bodies $\{(1-\lambda)K_t \vee-\lambda K_t\}_{t\in [\alpha, \beta]}$ is a linear parameter system (see \cite{RS3} for the definition), since all vertices are moving parallel to $u$. Therefore the area of $(1-\lambda)K_t \vee-\lambda K_t$ is a convex function of $t$, and attains its maximum in one of the edges of the interval $[\alpha,\beta]$. Assume the maximum is achieved at $\alpha$, and set $K'=K_{\alpha}$. Then, in addition to having unit area and center of mass at the origin, $K'$ satisfies $$
{\rm Area}((1-\lambda)K \vee-\lambda K ) \le {\rm Area}((1-\lambda)K'\vee-\lambda K' ),$$
as $K=K_0$. Since $x_2 +\alpha u \in l_1$, $K'$ has one less vertex than $K$, and thus the proof is complete.
\end{proof}

\section{Appendix}
We give here another proof of Theorem~\ref{thm:K-L-vol}. 
The proof is attained by essentially repeating the arguments from~\cite{RS2}, but instead of considering $K$ and $-K$, we consider two general convex bodies $K$ and $L$. Moreover, we will be able to characterize the equality case in Theorem~\ref{thm:ab-vol}.

\subsection{The Rogers--Shephard body}
% Note that the shadow system is this body's projections (for later reference). 
We consider the following $(2n+1)$-dimensional body (see Figure 2), a
special case of which, where $K=L$, plays a central role in~\cite{RS2}.
\[
G{(K,L)}:=\left\{ \left(x,y,\theta\right)\in\RR^{n}\times\RR^{n}\times\RR\,|\,\theta\in\left[0,1\right],\, x\in\theta K,\, x+y\in (1-\theta) L\right\} .
\]

\begin{figure}[h!] \label{fig-RS-Body}
	\centering
	\includegraphics[width=0.5\textwidth]{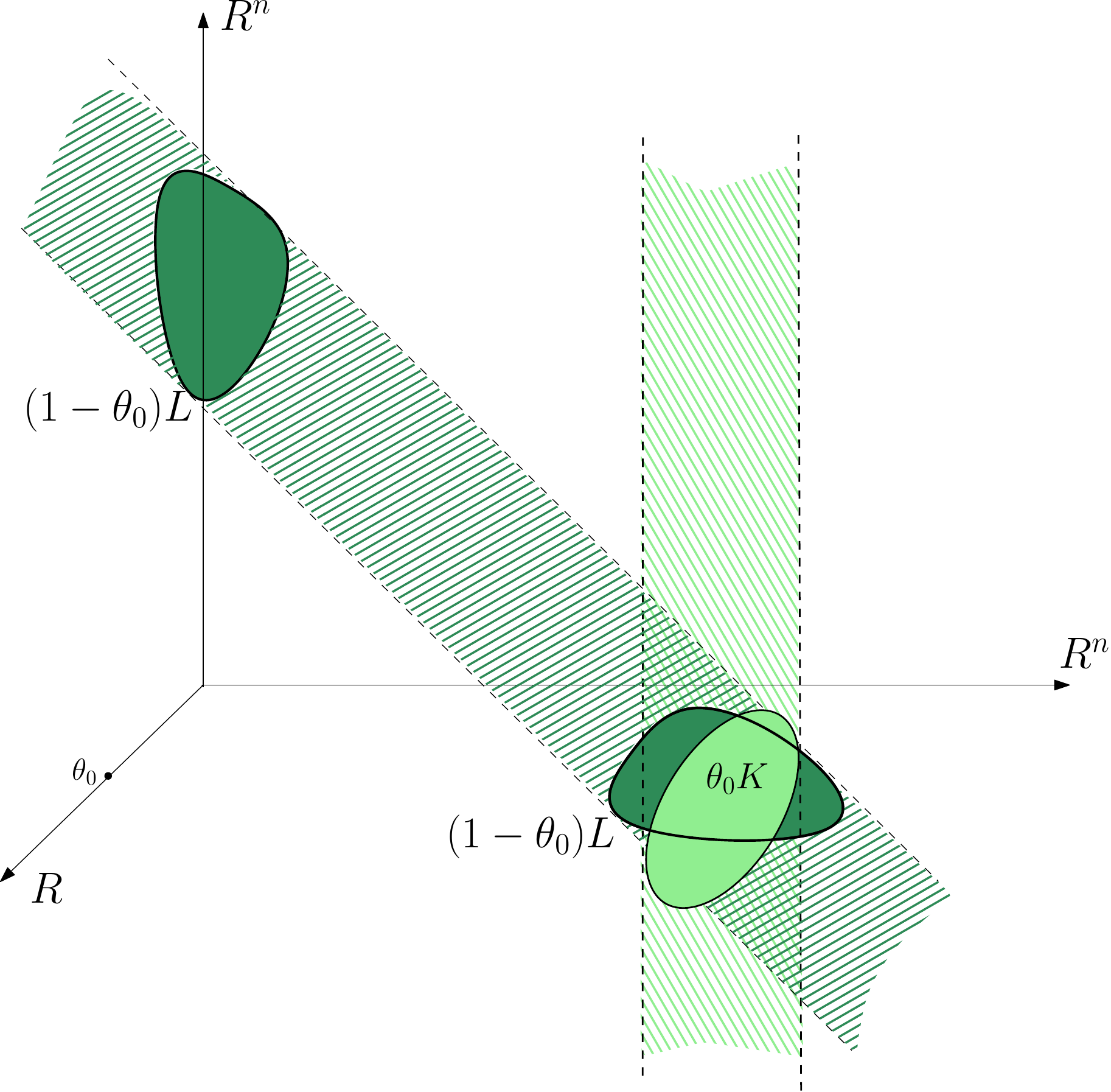}
	\caption{The section of $G{(K,L)}$ where $\theta=\theta_0,\,y=0$, is an intersection of two cylinders.}
\end{figure}

The projection of $G{(K,L)}$ onto the $\left(n+1\right)$-dimensional subspace of points of the form $\left(0,y,\theta\right)$ is denoted by $C(K,L) =\{ (0,y,\theta) \ | \ \theta \in [0,1], \ y \in (1-\theta) L - \theta K \}$. Equivalently, 
\begin{equation}\label{eq:C-a,b}
C(K,L)=\{0\}\times {\rm conv}\left(L\times\left\{ 0\right\} ,\,-K\times\left\{ 1\right\} \right)\subseteq\{0\}\times \RR^{n+1}.
\end{equation}

\noindent When $K=L$, this is exactly the body $C(K)$ used in \cite{RS2}. 

The main tool used by Rogers and Shephard in \cite{RS2} for finding an
upper bound for the volume of the difference body is the following
theorem which we  provide along with its proof, for the convenience of
the reader.

\begin{thm}[{\bf Rogers and Shephard}] \label{lem:volBoundBySecProj}
Let $K\subset\RR^n$ be a convex body, and let $H=K\cap E$ be a
$j$-dimensional section of $K$, and  $L$ the orthogonal
projection of $K$ onto $E^\perp$. Then
	\begin{equation}\label{eq:RSlemm}
		\frac{j!\left(n-j\right)!}{n!}\vol_{j}\left(H\right)\vol_{n-j}\left(L\right)\le \vol_{n}\left(K\right).
	\end{equation}
Moreover, equality holds if and only if for every direction $v\in E^\perp$, the intersection of
$K$ with $E+\R^+v$ is obtained by taking the convex hull of $H$ and one more point.
\end{thm}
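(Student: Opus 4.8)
The plan is to fix coordinates so that $E = \mathbb{R}^j \times \{0\}$ and $E^\perp = \{0\} \times \mathbb{R}^{n-j}$, writing points of $\mathbb{R}^n$ as $(x,y)$ with $x \in \mathbb{R}^j$, $y \in \mathbb{R}^{n-j}$. Then $H = \{x : (x,0) \in K\}$ (viewed in $\mathbb{R}^j$) and $L = \{y : (x,y) \in K \text{ for some } x\}$ (viewed in $\mathbb{R}^{n-j}$). The key geometric observation is that $K$ contains the convex hull of the ``horizontal'' slice $H \times \{0\}$ with its ``shadow fibers'': for each $y \in L$, pick some $x(y)$ with $(x(y), y) \in K$; then by convexity, for every $t \in [0,1]$ and every $x \in H$, the point $\big((1-t)x + t\,x(y),\, t y\big)$ lies in $K$. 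Thus $K \supseteq \bigcup_{y \in L} \mathrm{conv}\big(H \times \{0\},\, \{(x(y), y)\}\big)$.

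The main step is a volume computation via Fubini. I would slice $K$ (or rather the inner body above) by the affine subspaces $\{y = y_0\}$ for $y_0 \in L$. The slice at height $y_0$, by the inclusion above, contains a translate of $(1 - |y_0|/\dots)$-scaled copy of $H$; more precisely, writing $y_0 = s\, \eta$ for a boundary point $\eta$ of $L$ in the direction of $y_0$ and $s \in [0,1]$, the slice contains a translate of $(1-s)H$, hence has $j$-dimensional volume at least $(1-s)^j \vol_j(H)$. Integrating in polar-type coordinates adapted to $L$: $\vol_n(K) \ge \int_L (1-\|y\|_L)^j\,dy \cdot \vol_j(H)$, where $\|\cdot\|_L$ is the Minkowski gauge of $L$. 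The integral $\int_L (1 - \|y\|_L)^j\, dy$ evaluates, via the layer-cake / radial decomposition $\int_L \phi(\|y\|_L)\,dy = (n-j)\vol_{n-j}(L)\int_0^1 \phi(s) s^{n-j-1}\,ds$, to $(n-j)\vol_{n-j}(L)\,B(n-j, j+1) = \vol_{n-j}(L)\cdot \frac{j!(n-j)!}{n!}$ after simplifying the Beta function. This yields exactly \eqref{eq:RSlemm}. (This is, in essence, the cone-volume computation appearing in Rogers--Shephard; one can alternatively phrase it as: the convex hull of $H$ with a single point outside $E$ at ``height one'' has the stated volume, and $K$ contains a nested family of such.)

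For the equality characterization: equality forces the slice at each height $y_0$ to be \emph{exactly} a translate of $(1-\|y_0\|_L)H$, with no extra volume, for a.e.\ $y_0$, and hence for all $y_0$ by continuity of convex bodies. Tracking the direction $v \in E^\perp$: the intersection $K \cap (E + \mathbb{R}^+ v)$ is a convex subset of a $(j+1)$-dimensional halfspace whose slice at height $0$ is $H$ and whose slices shrink affinely (homothetically) to a point; such a body is precisely the convex hull of $H$ with that single apex point. So equality holds iff for every $v \in E^\perp$, $K \cap (E + \mathbb{R}^+ v)$ is the convex hull of $H$ with one additional point, as claimed.

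The main obstacle is making the slicing argument rigorous at the level of equality: one must verify that the inner body $\bigcup_{y} \mathrm{conv}(H \times \{0\}, \{(x(y),y)\})$ is genuinely convex (it is, since $K$ is convex and it is squeezed between a convex set and $K$), that the slice volumes behave affinely exactly when equality holds, and that the choice of $x(y)$ can be made so that the apex structure is coherent across all directions $v$ — i.e.\ deducing the \emph{global} cone/hull structure of $K\cap(E+\mathbb{R}^+v)$ from the \emph{a.e.}\ equality in the integral. This requires a continuity/closedness argument on the family of slices and care that the gauge $\|\cdot\|_L$ and the radial integration formula apply (which needs $0 \in \mathrm{int}\, L$, available after a harmless translation since $K$ has nonempty interior).
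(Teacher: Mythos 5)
Your argument is correct in substance but follows a genuinely different route from the paper. For the inequality you avoid symmetrization altogether: you bound each slice from below, $\vol_j(K\cap(E+y))\ge (1-\|y\|_L)^j\vol_j(H)$ for $y\in L$ (by taking convex combinations of $H$ with one point of $K$ lying over the radial boundary point $y/\|y\|_L$ of $L$), and then integrate using the radial/Beta computation $\int_L(1-\|y\|_L)^j\,dy=\tfrac{j!(n-j)!}{n!}\vol_{n-j}(L)$. The paper instead performs a Schwarz symmetrization of $K$ along $E$, uses convexity (Brunn--Minkowski) to get the symmetrized body $K^*$, observes $H^*\vee L\subseteq K^*$, and identifies the left-hand side of \eqref{eq:RSlemm} with $\vol_n(H^*\vee L)$ via the very same Beta computation; your version is more direct and entirely elementary, while the paper's symmetrization pays off in the equality analysis, where linearity of the radial profile $f_v(t)=\vol_j(K_{tl})^{1/j}$ plus the equality case of Brunn--Minkowski (with one summand of zero volume) immediately forces each top slice $K_l$ to be a point and each intermediate slice to equal $tK_l+(1-t)H$. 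Two small inaccuracies in your write-up are harmless: the union $\bigcup_y\conv(H\times\{0\},\{(x(y),y)\})$ need not be convex (and your ``squeezed between convex sets'' justification is not valid), but convexity of that union is never needed, only the pointwise slice bound; and you do not need $0\in\mathrm{int}\,L$ (nor can a translation arrange it without changing $H$) --- $0\in L$, which holds automatically since $H\neq\emptyset$, suffices for the radial decomposition with the gauge $\|\cdot\|_L$.

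The one place where your proposal is genuinely incomplete is the equality characterization, and you have correctly identified it. From equality in the integral you get the exact slice identity only for almost every $y\in L$; continuity of $y\mapsto\vol_j(K\cap(E+y))$ upgrades this to all of $\mathrm{int}\,L$, but the claimed conclusion is for \emph{every} direction $v\in E^\perp$, including rays $\R^+v$ whose intersection with $L$ lies entirely in $\partial L$ (possible, e.g., when $0\in\partial L$). There slice volumes of a convex body can a priori jump, so one must argue separately, say by comparing with segments into $\mathrm{int}\,L$, using concavity of $\vol_j(K\cap(E+y))^{1/j}$ together with lower semicontinuity and convexity of the gauge $\|\cdot\|_L$ to pin the boundary slice volume down to $(1-\|y\|_L)^j\vol_j(H)$. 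Secondly, knowing that each slice over the ray is \emph{some} translate of $(1-t/\rho)H$ does not by itself give a single apex: you must check that the translation vectors depend affinely on the height (this follows from convexity of $K\cap(E+\R^+v)$ by a midpoint argument, since a translate of $(1-t/\rho)H$ containing another translate of the same body must coincide with it), and that the top slice is a single point (if it contained a nondegenerate set $B$, then intermediate slices would contain $(1-t/\rho)H+(t/\rho)B$, which cannot sit inside a translate of $(1-t/\rho)H$). These are fillable details rather than wrong steps, but they are exactly the work that the paper's symmetrization-plus-Brunn--Minkowski route dispatches automatically.
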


\begin{proof}[{\bf Proof of Theorem \ref{lem:volBoundBySecProj}}]
The proof consists of two arguments. The first one is that all of the quantities in \eqref{eq:RSlemm}
are invariant under a Schwarz symmetrization, so we may assume that all intersections of $K$ parallel to $H$ are (centered) dilates of a ball. That is, we
may consider the body:
$$ K^*=\left\{(l,y)\in\R^{n-j}\times\R^j\quad |\quad l\in L, \ |y|\le \left(\frac{\vol_j(K_l)}
{\vol_j(B_2^j)}	\right)^{1/j}  \right\},$$ 
where $K_l=K\cap (E+l)$, for every $l\in L$.
Their second argument is that $H^*\vee L \subseteq K^*$, where 
$H^*$ denotes the section $K^* \cap E$.  Then, a simple
computation shows that the left-hand side of \eqref{eq:RSlemm} equals
$\vol_n(H^*\vee L)$, and thus inequality \eqref{eq:RSlemm} holds.

Assume now that equality holds in  \eqref{eq:RSlemm}. First note that 
$H^*\vee L\subseteq K^*$,
thus equality in volumes implies $K^*=H^*\vee L=H^*\vee \partial L$.
Moreover, for every direction $v\in E^{\perp}$, let $[0,l] = L\cap (\R^+v)$, and for
every $t\in[0,1]$, let $f_v(t)=\vol_{j} (K_{tl})^{1/j}$. Note that $f_v$ is invariant
under Schwartz symmetrization, hence for every $ v\in E^{\perp}$, $f_v$ is linear, and $f_v(1)=0$.
Since $K_{tl}$ contains  $t K_{l}+(1-t)H$, from Brunn Minkowski's inequality one has
$$ f_v(t)=\vol_{j}(K_{tl})^{1/j} \geq t \vol_{j}(K_{l})^{1/j} + (1-t) \vol_{j}(H)^{1/j} = (1-t)f_v(0).$$
Thus, from the equality case in Brunn Minkowski's inequality, $K_l$
is a homothety of $H$ of zero volume, i.e. it is a point. Moreover, for every $t \in [0,1]$, one has 
$K_{tl} = t K_{l}+(1-t)H$, and thus $K \cap (E+\R^+v) = H \vee K_l$, and the proof is complete. 
\end{proof}

%this section seems redundant (Why redundant, Keshet?)
\subsection{An upper bound for the volume of $C(K,L)$}

\begin{thm}\label{thm:RS-K-L-bound}
		For convex bodies $K,L\subseteq\RR^n$, let $C(K,L)$ be as defined in \eqref{eq:C-a,b}. For every $\theta\in[0,1]$,
		\[
		\vol_{n+1}\left(C\left(K,L\right)\right)\leq\frac{1}{n+1} \left ( \frac{\vol_{n}\left(K\right)\vol_{n}\left(L\right)}{\vol_n(\theta K\cap(1-\theta) L) } \right ).
		\]
\end{thm}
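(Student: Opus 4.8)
The plan is to realize $C(K,L)$ as the projection of the $(2n+1)$-dimensional Rogers--Shephard body $G(K,L)$, so that we can apply Theorem~\ref{lem:volBoundBySecProj} to $G(K,L)$ with a cleverly chosen section. First I would compute $\vol_{2n+1}(G(K,L))$ directly by Fubini: slicing at fixed $\theta$, the fiber is $\{(x,y) \mid x\in\theta K,\ x+y\in(1-\theta)L\}$, which by the substitution $y\mapsto x+y$ is a product $\theta K \times (1-\theta)L$ of volume $\theta^n(1-\theta)^n\vol_n(K)\vol_n(L)$. Integrating over $\theta\in[0,1]$ gives $\vol_{2n+1}(G(K,L)) = \frac{(n!)^2}{(2n+1)!}\vol_n(K)\vol_n(L)$.

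The key step is to identify a suitable $n$-dimensional section and its complementary projection. The projection of $G(K,L)$ onto the $(n+1)$-dimensional $(y,\theta)$-subspace is exactly $C(K,L)$. For the section, fix a $\theta\in[0,1]$ and intersect $G(K,L)$ with the $n$-dimensional affine subspace $E = \{(x,0,\theta)\}$: a point $(x,0,\theta)$ lies in $G(K,L)$ iff $x\in\theta K$ and $x\in(1-\theta)L$, i.e. $x\in \theta K\cap(1-\theta)L$. So the section has $n$-volume $\vol_n(\theta K\cap(1-\theta)L)$, and the projection onto $E^\perp$ — the $(y,\theta)$-directions together with... here I need to be careful: $E$ is the $x$-hyperplane at height $\theta$, which is only $n$-dimensional inside the $(2n+1)$-dimensional ambient space, so $E^\perp$ is $(n+1)$-dimensional, and the orthogonal projection of $G(K,L)$ onto $E^\perp$ must contain $C(K,L)$ (since projecting away the $x$-coordinates is a coarser operation than what defines $C(K,L)$, which also projects away $x$). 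Thus $\vol_{n+1}(\mathrm{proj}_{E^\perp}G(K,L)) \ge \vol_{n+1}(C(K,L))$.

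Then Theorem~\ref{lem:volBoundBySecProj} applied to $K' = G(K,L)$, $j = n$, $n_{\text{tot}} = 2n+1$ gives
\[
\frac{n!\,(n+1)!}{(2n+1)!}\,\vol_n(\theta K\cap(1-\theta)L)\cdot \vol_{n+1}(\mathrm{proj}_{E^\perp}G(K,L)) \le \vol_{2n+1}(G(K,L)) = \frac{(n!)^2}{(2n+1)!}\vol_n(K)\vol_n(L).
\]
Cancelling $n!/(2n+1)!$ and using $\vol_{n+1}(C(K,L))\le \vol_{n+1}(\mathrm{proj}_{E^\perp}G(K,L))$ yields
\[
(n+1)!\,\vol_n(\theta K\cap(1-\theta)L)\,\vol_{n+1}(C(K,L)) \le n!\,\vol_n(K)\vol_n(L),
\]
i.e. $\vol_{n+1}(C(K,L))\le \frac{1}{n+1}\cdot\frac{\vol_n(K)\vol_n(L)}{\vol_n(\theta K\cap(1-\theta)L)}$, as claimed.

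The main obstacle I anticipate is purely bookkeeping: getting the orthogonality of the chosen section $E$ right (the section must genuinely be $K'\cap E$ for an affine subspace $E$, and $E^\perp$ must be its orthogonal complement in the full $(2n+1)$-space, not merely a complementary subspace), and verifying that the projection of $G(K,L)$ onto $E^\perp$ indeed contains $C(K,L)$ — this containment, rather than equality, is all we need and is what makes the argument robust. One should double-check the degenerate cases $\theta=0$ or $\theta=1$ (where the section may be lower-dimensional or $\theta K\cap(1-\theta)L$ empty), but these are boundary cases where the inequality holds trivially or by a limiting argument. The combinatorial constants $\frac{j!(n-j)!}{n!}$ with $j=n$ and ambient dimension $2n+1$ must be tracked carefully, but this is routine.
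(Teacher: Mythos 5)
Your proposal is correct and follows essentially the same route as the paper: compute $\vol_{2n+1}(G(K,L))$ by Fubini, then apply Theorem~\ref{lem:volBoundBySecProj} to $G(K,L)$ with the section $E=\{y=0,\ \theta=\theta_0\}$ and the complementary projection, which the paper notes is exactly $C(K,L)$ (your weaker containment $C(K,L)\subseteq\mathrm{proj}_{E^\perp}G(K,L)$ is in fact an equality, and suffices). The constants and the conclusion match the paper's proof.
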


\noindent In \cite{RS2} this bound was obtained for the specific case $K=L$ and $\theta=\frac{1}{2}$.

\begin{proof}[{\bf Proof of Theorem \ref{thm:RS-K-L-bound}}]
	In order to estimate ${\rm Vol}_{n+1}\left(C\left(K,L\right)\right)$, we apply Theorem \ref{lem:volBoundBySecProj} for the body $G{(K,L)}$, and the  $n$-dimensional affine subspace  $E= \{ \theta = \theta_0, y=0 \}$.
	First, by Fubini's Theorem,
	\begin{eqnarray*}
		\vol_{2n+1}\left(G{(K,L)}\right) & = & \int_{0}^{1}d\theta\int_{\theta K}\vol_n  ((1-\theta)L )dx\\
		& = & 
		\vol_n\left(K\right)\vol_n\left(L\right)\int_{0}^{1}\theta^n\left(1-\theta\right)^n d\theta\\
		& = & 
		\vol_n\left(K\right)\vol_n\left(L\right)\frac{n!n!}{\left(2n+1\right)!}.
	\end{eqnarray*}
As in Theorem \ref{lem:volBoundBySecProj}, set $H = G{(K,L)} \cap E$.
Note that $\vol_n(H) =\vol_n(\theta_0 K\cap (1-\theta_0)L)$. As
mentioned before, the projection of $G{(K,L)}$ on $E^{\perp}$ is exactly
$C\left(K,L\right)$, and using Theorem  \ref{lem:volBoundBySecProj} we get
	\begin{equation}\label{eq:sections-of-G(K,L)}
		\frac{n!\left(n+1\right)!}{\left(2n+1\right)!}
		\vol_{n+1}\left(C\left(K,L\right)\right)
		\vol_{n}\left(\theta_0 K\cap (1-\theta_0)L \right) \leq
		\vol_{2n+1}\left(G{(K,L)}\right).
	\end{equation}
Plugging in the volume of $G{(K,L)}$ completes the proof of the theorem. 
\end{proof}

\subsection{A second proof of Theorem \ref{thm:K-L-vol}}
Using Theorem \ref{lem:volBoundBySecProj}, this time for the body
$C(K,L)$, and the volume bound from Theorem \ref{thm:RS-K-L-bound},
we can give yet another proof of Theorem \ref{thm:K-L-vol}.

\begin{proof}[{\bf Proof of Theorem \ref{thm:K-L-vol}}]
Let $K,L\subseteq \RR^n$ be two convex bodies such that $0\in K\cap L$,
and set $\theta\in[0,1]$. We need to show that	\[
\vol(L \vee -K) \, \vol( \theta K\cap (1-\theta) L) \le
\vol(K)\vol(L).    \]
Let $E$ be the 1-dimensional subspace of ${\mathbb R}^{n+1}$ given by
$E = \{ x =0 \}$. The body $L \vee -K$ is the $n$-dimensional projection
of $C(K,L)$ onto the subspace $E^{\perp}= \{(x,0) \, | \, x\in\RR^n\}$.
Since $0 \in K \cap L$, the section $H = E \cap C(K,L)$ is a unit segment.
By Theorem \ref{lem:volBoundBySecProj},
	\begin{equation}\label{eq:C-versus-(K-L)}
	 \frac{1}{n+1}\vol_{n}(-K\vee L) \leq \vol_{n+1}(C(K,L)).
	\end{equation}
Combining this with the volume bound for $C(K,L)$ established in
Theorem \ref{thm:RS-K-L-bound} above, we get the desired inequality.
\end{proof}

\subsection{The equality case in Theorem \ref{thm:ab-vol} and in Theorem \ref{thm:K-L-vol}}

Here we characterize the equality cases in Theorems  \ref{thm:ab-vol}
and \ref{thm:K-L-vol}. We start with the former, and show that equality
holds in~$(\ref{ineq-needed-for-God})$ if and only if $K$ is a simplex
with a vertex at the origin. Indeed, it is not hard to check that for
the standard simplex $S$ one has \[
\vol((1-\lambda)S\vee-\lambda S)=
\sum_{k=0}^n \binom{n}{k}(1-\lambda)^k \lambda^{n-k}\vol(S)
=\vol(S).\]
As for the other direction, assume that $\vol((1-\lambda)K\vee -\lambda K)=
\vol(K)$. We wish to show that $K$ is a simplex with $0$ as one of its
vertices. Recall from the introduction that Theorem \ref{thm:K-L-vol},
for $(1-\lambda)K, \lambda K$, and $\theta_0 = \lambda$, immediately
yields inequality~$(\ref{ineq-needed-for-God})$. Combining
\eqref{eq:sections-of-G(K,L)} and \eqref{eq:C-versus-(K-L)} in this case yields
\begin{equation}\label{eq:C-versus-(aK-V-(x-bK))}
	\vol_n(-(1-\lambda)K\vee \lambda K)\le 
	(n+1) \vol_{n+1}(C((1-\lambda)K,\lambda K))\le
	\vol_n(K).
\end{equation}
From the assumption $\vol((1-\lambda)K\vee -\lambda K)=\vol(K)$ it
follows that both inequalities in~$(\ref{eq:C-versus-(aK-V-(x-bK))})$
are in fact equalities. In particular, equality holds in
\eqref{eq:sections-of-G(K,L)} for the bodies $(1-\lambda)K, \lambda K$,
and $\theta_0 = \lambda$. By the equality condition in Theorem
\ref{lem:volBoundBySecProj}, this implies in particular that sections
of the body $G{((1-\lambda)K,\lambda K)}$ by affine subspaces of the
form $\{(x,y,\theta)| x\in \R^n, y=y_0, \theta =\lambda\}$, for any
$y_0\in\lambda(1-\lambda)K-\lambda(1-\lambda)K$ (which are given by
$\lambda(1-\lambda)K \cap \lambda(1-\lambda)K -y_0$), are homothetic.
Thus, $K$ must be a simplex by the following lemma due to Rogers and
Shephard.

\noindent {\bf Lemma 4 from \cite{RS}}. {\it Let $K$ be a convex body
in $\RR^n$. If  the intersections $K \cap (K+x)$ are homothetic for all
$x \in K - K$, then $K$ is a simplex.  }

\noindent Finally, in order to show that $0$ is a vertex of $K$, note
that equality holds also in~$(\ref{eq:C-versus-(K-L)})$, for the bodies
$(1-\lambda)K, \lambda K$. Hence, among all sections of $C((1-\lambda)K,
\lambda K)$ parallel to $H = E \cap C((1-\lambda)K, \lambda K)$, $H$ is
the only one with unit length. Since we assumed that $K$ contains the
origin, one has $(1-\lambda) K \cap - \lambda K=\{0\}$. This, together
with the fact that $K$ is a simplex, means that $0$ must be one of the
vertices of $K$. 

We turn now to show that equality in Theorem  \ref{thm:K-L-vol} holds
if and only if $K$ and $L$ are simplices with a common vertex at the
origin, and such that $(1-\theta)L = \theta K$. To this end, we shall
make use of Theorem \ref{thm:strange}. Assume that for some given
$\theta\in (0,1)$,
\begin{equation} \label{eq-in-1.6}
\vol(L \vee -K) \, \vol(\theta K\cap(1-\theta) L) =  \vol(K)\vol(L).
\end{equation}
Since the inclusion $\theta K\cap(1-\theta) L\subseteq \left( K^\circ +L^\circ\right)^\circ$ always holds, by Theorem \ref{thm:strange} we then must have 
 \[ \theta K\cap(1-\theta) L = \left( K^\circ +L^\circ\right)^\circ \]
(as both are compact convex sets, inclusion together with equality of volumes implies equality of sets). 
We claim that this equality implies that $K$ and $L$ are homothetic.
Indeed, we may rewrite the above equality as
 \[ \theta^{-1} K^\circ \vee (1-\theta)^{-1} L^\circ = K^\circ +L^\circ, \]
so that in particular 
\[ \theta^{-1} K^\circ  \subset K^\circ +L^\circ, \qquad (1-\theta)^{-1} L^\circ  \subset K^\circ +L^\circ.\]
Thus, $\theta^{-1}h_{K^\circ}\le  h_{K^\circ}+h_{L^\circ}$, and
$(1-\theta)^{-1}h_{L^\circ}\le  h_{K^\circ}+h_{L^\circ}$. Putting the two together one has
\[h_{L^\circ} = \frac{1-\theta}{\theta}h_{K^\circ}, \]
or equivalently, $(1-\theta)L = \theta K$. This, together
with~$(\ref{eq-in-1.6})$ implies that equality in~$(\ref{ineq-needed-for-God})$
holds for $K$. Thus, from the characterization of the equality case in Theorem
\ref{thm:ab-vol}, the body $K$ must be a simplex with a vertex at the origin.
This completes the proof for the equality case in Theorem \ref{thm:K-L-vol}. 

\begin{rem} {\rm
It is worthwhile to notice that in Theorem \ref{thm:strange} there are more equality cases than in Theorem \ref{thm:ab-vol}. Indeed, one may readily check that for positive $\lambda_1,\dots,\lambda_n$, and the bodies $K=\conv \{ 0, e_1, \ldots, e_n\}$, $L = \conv \{ 0, \lambda_1e_1, \ldots, \lambda_n\} $ we have that $\vol(K)\vol(L) = \frac{1}{n!^2}\prod_{i=1}^n \lambda_i$, that 
\[ \vol(\left( K^\circ +L^\circ\right)^\circ)= \frac{1}{n!}\prod_{i=1}^n\frac{\lambda_i}{1+\lambda_i}\]
and that 
\[ \vol(K \vee -L)= \frac{1}{n!}\sum_{A\subset\{1,\ldots n\} }\prod_{i\in A} \lambda_i  = \frac{1}{n!}\prod_{i=1}^n (1+ \lambda_i). \] 
The characterization of the equality case in Theorem \ref{thm:strange} is thus left open. }
\end{rem}

\bibliographystyle{amsplain}
\addcontentsline{toc}{section}{\refname}\bibliography{Two-remarks-on-God}

\noindent Shiri Artstein-Avidan, Dan Florentin, Keshet Einhorn, Yaron Ostrover\\
\noindent School of Mathematical Science, Tel Aviv University, Tel Aviv, Israel.\\

\noindent shiri@post.tau.ac.il  \\
 danflore@post.tau.ac.il  \\ keshetgu@mail.tau.ac.il \\
  ostrover@post.tau.ac.il

\end{document}